\documentclass[a4paper]{article}
\usepackage{amsmath,amssymb,amsthm,amsfonts,graphicx,fullpage,boites}
\usepackage{epsfig}
\usepackage{color}


\newtheorem{thm}{Theorem}
\newtheorem{cor}[thm]{Corollary}


\newcommand{\R}{\mathbb R}

\newcommand{\mat}[1]{\left[\begin{array}{#1}}
\newcommand{\rix}{\end{array}\right]}
\renewcommand{\t}{^{\mbox{\tiny\sf T}}}

\def\Rset{\mathbb{R}}

\def\bfo{{\mathbf 1}}

\newcommand{\comment}[1]{}
\renewcommand{\t}{^{\mbox{\tiny\sf T}}}

\usepackage{algpseudocode}
\usepackage{algorithm}
\algnewcommand{\algorithmicgoto}{\textbf{go to}}%
\algnewcommand{\Goto}[1]{\algorithmicgoto~\ref{#1}}%

\newcommand{\Span}{\mathrm{span}}
\newcommand{\rank}{\mathrm{rank}}
\newcommand{\CG}{{\tt CG}}
\newcommand{\cCG}{{\tt cCG}}
\newcommand{\mcCG}{{\tt mcCG}}

\title{
A cooperative conjugate gradient method for linear systems permitting multithread implementation of low complexity
}

\author{Amit Bhaya, Pierre-Alexandre Bliman, Guilherme Niedu and Fernando Pazos 
\thanks{ABs work was supported by grants BPP/CNPq and, additionally, CNE from FAPERJ and Universal/CNPq. GN and FP were supported by DS and PNPD fellowships, respectively, from CNPq.}
\thanks{A.\ Bhaya, G.\ Niedu, F.\ Pazos are with the Department of Electrical Engineering, Federal University of Rio de Janeiro, Rio de Janeiro, Brazil, {\tt\small amit@nacad.ufrj.br}.
P.-A.\ Bliman is with Inria, Rocquencourt BP105, 78153 Le Chesnay cedex, France, {\tt\small pierre-alexandre.bliman@inria.fr}%
}}

\begin{document}

\maketitle

\begin{abstract}
%
This paper proposes a generalization of the conjugate gradient (CG) method used to solve the equation $Ax=b$ for a symmetric positive definite matrix $A$ of large size $n$. The generalization consists of permitting the scalar control parameters (= stepsizes in gradient and conjugate gradient directions) to be replaced by matrices, so that multiple descent and conjugate directions are updated simultaneously. Implementation involves the use of multiple agents or threads and is referred to as cooperative CG (cCG), in which the cooperation between agents resides in the fact that the calculation of each entry of the control parameter matrix now involves information that comes from the other agents. For a sufficiently large dimension $n$, the use of an optimal number of cores gives the result that the multithread implementation has worst case complexity $O(n^{2+1/3})$ in exact arithmetic. Numerical experiments, that illustrate the interest of theoretical results, are carried out on a multicore computer.


\end{abstract}
\section{Introduction}

The paradigm of cooperation between agents in order to achieve some common objective has now become quite common in many areas such as control and distributed computation, while repesenting a multitude of different situations and related mathematical questions \cite{BIW2005,NO10,Murray07}.

In the field of computation, the emphasis has been mainly on the paradigm of parallel computing in which some computational task is subdivided into as many subtasks as there are available processors. The subdivision naturally induces a communication structure (or graph), connecting processors and the challenge is to achieve a subdivision that maximizes concurrency of tasks (hence minimizing total computational time), while simultaneously minimizing communication overhead. This paradigm arose as a consequence of the usual architecture of most early multiprocessor machines, in which interprocessor communication is a much slower operation than a mathematical operation carried out in the same processor. Disadvantages of this approach arise from the difficulty of effectively decomposing a large task into minimally connected subtasks, difficulties of analysis and the need for synchronization barriers at which all processors wait for the slowest one, in order to  exchange information with the correct time stamps (i.e., without asymmetric delays).

More recently, in the area of control, interest has been focused on multiagent systems, in which a number of agents cooperate amongst themselves, in a distributed manner and also subject to a communication graph that describes possible or allowable channels between agents, in order to achieve some (computational) task. Similarly, in the area of computation, multicore processors have now become common -- in these processors, each core acommodates a thread which is executed independently of the threads in the other cores. Thus, in the context of this paper, which is focused on solution of the linear system of equations $Ax=b$ for a symmetric positive definite matrix $A$ of large size $n$, we will assume that each agent carries out a task that is represented by one thread that executes on one core, so that, in this sense, the words agent and thread can be assumed to represent the same thing. In what follows, unless we are specifically talking about numerical implementation, we will give preference to the word agent.

With the advent of ever larger on-chip memory and multicore processors that allow multithread programming, it is now possible to propose a new paradigm in which each thread, with access to a common memory, computes its own estimate of the solution to the whole problem (i.e., decomposition of the problem into subproblems is avoided) and the threads exchange information amongst themselves, this being the cooperative step. The design of a cooperative algorithm has the objective of ensuring that exchanged information is used by the threads in such a way as to reduce overall convergence time.

The idea of information exchange between two iterative processes was introduced into numerical linear algebra long before the advent of multicore processors by Brezinski \cite{BR94} under the name of \textit{hybrid procedures}, defined as (we quote) ``a combination of two arbitrary approximate solutions with coefficients summing up to one...(so that) the combination only depends on one parameter whose value is chosen in order to minimize the Euclidean norm of the residual vector obtained by the hybrid procedure... The two approximate solutions which are combined in a hybrid procedure are usually obtained by two iterative methods.'' The objective of minimizing the residue is to accelerate convergence of the overall hybrid procedure. This idea was generalized and discussed in the context of distributed asynchronous computation in \cite{BBP10}.

More specifically, this paper explores the idea of cooperation between $p$ agents (or threads) in the context of the conjugate gradient (\CG) algorithm applied to an $n$-dimensional linear system $Ax=b$, for a symmetric positive definite matrix $A$ of large size $n$. Throughout the paper it is assumed that $p<n$, and even that $p\ll n$: the number of agents may be ``large", but it is usually ``much smaller" than the ``huge" size of matrix $A$. The famous \CG\ algorithm, proposed in \cite{HS52}, has several interesting properties, both as an algorithm in exact arithmetic and as one in finite precision arithmetic \cite{MS06,Gre97}. However, it is well known that, due to its structure, it cannot be parallelized in the conventional sense. In this paper, we revisit the \CG\ algorithm from a multithread perspective, which can be seen as a direct generalization of the control approach to the \CG\ algorithm proposed in \cite[pp.77-82]{BK06}, in which the scalar control parameters (stepsizes in gradient and conjugate gradient directions) are replaced by matrices (i.e., multivariable control). The cooperation between agents resides in the fact that the calculation of each entry of the control matrix now involves information that comes from the other agents. The method can also be seen as a generalization of the traditional \CG\ algorithm in which multiple descent and conjugate directions are updated simultaneously.

The paper is organized as follows.
Section \ref{se2} briefly recalls the construction, as well as the main convergence results, of Conjugate Gradient method.
Section \ref{se3} then presents the new algorithm, called {\em cooperative Conjugate Gradient (\cCG) method}.
In order to simplify this presentation of the new algorithm, the case of $p=2$ agents is first introduced in Section \ref{se331}.
The general case $p\geq 2$ is then stated in full generality in Section \ref{se332}, together with analysis results.
Complexity issues are broached in Section \ref{se4}. The results stated therein concerns execution of \cCG\ algorithm in exact arithmetic. Section \ref{se5} is then devoted to numerical experiments with the multi-thread implementation.
Section \ref{se6} provides conclusions and directions for future work.

\paragraph{Notation}
For the fixed symmetric definite positive matrix $A\in\Rset^{n\times n}$, we define $A$-norm in $\Rset^n$ by
\begin{equation}
\|x\|_A \doteq (x\t Ax)^{1/2},\quad x\in\Rset^n
\end{equation}
and define {\em $A$-orthogonality} (or {\em conjugacy}) of {\em vectors} by:
\begin{equation}
x\perp_A y \Leftrightarrow x\t Ay=0,\quad x,y\in\Rset^n\ .
\end{equation}
We will also have to consider matrices whose columns are vectors of interest.
Accordingly, we will say that $X,Y\in\Rset^{n\times p}$ are orthogonal (resp.\ $A$-orthogonal) whenever each column of $X$ is orthogonal (resp.\ $A$-orthogonal) to each column of $Y$, that is when
\begin{equation}
X\t Y =0 \qquad \text{(resp.\ $X\t AY=0$)\ .}
\end{equation}

For any set of vectors $r_i\in\Rset^n$, $i=0,1,\dots, k$, we denote respectively $\{r_i\}_0^k$ and $[r_i]_0^k$ the set of these vectors, and the matrix obtained by their concatenation: $[r_i]_0^k = \left[
\begin{matrix}
r_0 & r_1 & \dots & r_k
\end{matrix}
\right]\in\Rset^{n\times (k+1)}$.
The notation $\Span\ [r_i]_0^k$ will denote the subspace of linear combinations of the columns of the matrix $[r_i]_0^k$.
When $\Rset^n$ is the ambient vector space, we thus have
\begin{equation}
\Span\ [r_i]_0^k \doteq \left\{
v\in\Rset^n\ :\ \exists \gamma\in\Rset^{k+1},\ v = \sum_{i=0}^k \gamma_i r_i = [r_i]_0^k \gamma
\right\}\ .
\end{equation}
Similarly, for matrices $R_i\in\Rset^{n\times p}$, $i=0,\dots, k$, the notation $\{ R_i\}_0^k$ (respectively, $[R_i]_0^k\in\Rset^{n\times (k+1)p}$) is used for the set of these matrices (respectively, the matrix obtained as concatenation of the matrices $R_0, R_1,\dots, R_k$, i.e., $[R_i]_0^k = \left[
\begin{matrix}
R_0 & R_1 & \dots & R_k
\end{matrix}
\right]\in\Rset^{n\times (k+1)p}$.)
Also, we write $\Span\ [R_i]_0^k$ for the subspace of linear combinations of the columns of $[R_i]_0^k$:
\begin{equation}
\label{bracket}
\Span\ [R_i]_0^k \doteq \left\{
v\in\Rset^n\ :\ \exists \gamma\in\Rset^{(k+1)p},\ v = [R_i]_0^k \gamma
\right\}\ .
\end{equation}
Notice that this notation generalizes the definition provided earlier for vectors, and that $\Span\ [R]$ is already meaningful for a single matrix $R\in\Rset^{n\times p}$.
As an example, $\dim\Span [R] = \rank\ R$.

Last, for any matrix $R\in\Rset^{n\times p}$ and for any set $J$ of indices in $\{1, \ldots , p\}$, we will denote
\[
R|_{j\in J} \doteq (R_{ij})_{1\leq i\leq n, j\in J} . 
\]


\section{The Conjugate Gradient Method}
\label{se2}

One approach to solving the equation
\begin{equation}
\label{equa}
Ax=b\ ,
\end{equation}
with $A$
symmetric positive definite and of large dimension, is to minimize instead the convex quadratic function
\begin{equation}
\label{equb}
f(x)=\frac{1}{2}x\t Ax-b\t x\ ,
\end{equation}
since the
unique optimal point is $x^*=A^{-1}b$. Several
algorithms are based on the standard idea of generating a sequence
of points, starting from an arbitrary initial guess, and proceeding
in the descent direction (negative gradient of $f(x)$), with an
adequate choice of the step size.
In mathematical terms:
\begin{equation}
\label{eqbasica}
x_{k+1}=x_k-\alpha_kr_k,\quad r_k=\nabla f(x_k)=Ax_k-b\ ,
\end{equation}
where $\alpha_k$ is the step size.
The vector $r_k$ represents both the {\em gradient} of the cost function $f$ at the current point $x_k$, and the current {\em residue} in the process of solving \eqref{equa}.

Amongst the possible choices for $\alpha_k$, a most natural one consists in minimizing the value of the function $f$ at $x_{k+1}$, that is in taking
\begin{equation}
\alpha_k = \arg\min_{\alpha\in\Rset} f(x_k-\alpha r_k)
\end{equation}
The algorithm obtained using this principle is the {\em Steepest Descent Method}, and one shows easily that the optimal value is given by the Rayleigh quotient
\begin{equation}
\label{sdm}
\alpha_k = \frac{r_k\t r_k}{r_k\t Ar_k}
\end{equation}
Algorithm \eqref{eqbasica}-\eqref{sdm} is convergent, but in general one cannot expect better convergence speed than the one provided by
\begin{equation}
\label{speed}
\|x_k-x^*\|_A \leq \left(
\frac{\kappa-1}{\kappa+1}
\right)^k \|x_0-x^*\|_A
\end{equation}
where $\kappa$ is the condition number
\begin{equation}
\kappa \doteq \frac{\lambda_{\max} (A)}{\lambda_{\min} (A)}\ .
\end{equation}

The main weakness of Steepest Descent is the fact that steps taken in the same directions as earlier steps are likely to occur.
The {\em Conjugate Direction Methods} avoid this drawback.
Based on a set of $n$ mutually conjugate nonzero vectors $\{d_i\}_0^{n-1}$ (that is $d_i\perp_A d_j$ for any $i\neq j$, $i,j=0,\dots, n-1$), the family of conjugate direction methods use the sequence generated according to
\begin{equation}\label{itprocess}
x_{k+1}=x_k+\alpha_kx_k,\quad \alpha_k=-\frac{r_k\t d_k}{d_k\t Ad_k}\ .
\end{equation}
It is a classical result that the residue $r_{k+1}$ is orthogonal to the directions $d_i$, $i=0,\dots,k$; and that $x_{k+1}$ indeed minimizes $f$ on the affine subspace $x_0+\Span\ [d_i]_0^k$ \cite{LY08}. As a consequence of this last property, conjugate direction methods lead to finite time convergence (in exact arithmetic).

The {\em Conjugate Gradient method}, developed by Hestenes and Stiefel \cite{HS52}, is the particular method of conjugate directions obtained when constructing the conjugate directions by Gram-Schmidt orthogonalization, achieved at step $k+1$ on the set of the gradients $\{r_i\}_0^k$.
A key point here is that this construction can be carried out iteratively.
The iterative equations of the Conjugate Gradient method are given in the pseudocode instructions of Algorithm \ref{algo1}.
Instructions \ref{CGb1}--\ref{CGb2} constitute the optimal descent process in the direction $d_k$; while instructions \ref{CGc1}--\ref{CGc2} achieve iteratively the orthogonalization of the subspaces $\Span\ [r_i]_0^k$.
\begin{algorithm}                   
\caption{\tt Conjugate Gradient (CG) algorithm}
\label{algo1}
\begin{algorithmic}[1]
\tt
\State choose $x_0\in\Rset^n$
\State $r_0 := Ax_0 - b$
\State $d_0 := r_0$
\State $k := 0$
\While{$d_k\neq 0$}
	\State $\alpha_k := -r_k\t d_k (d_k\t Ad_k)^{-1}$ \label{CGb1}
	\State $x_{k+1} := x_k + \alpha_kd_k$ \label{CGb2}
	\State $r_{k+1} := Ax_{k+1} - b$
	\State $\beta_k := -r_{k+1}\t Ad_k (d_k\t Ad_k)^{-1}$ \label{CGc1}
	\State $d_{k+1} := r_{k+1} + \beta_kd_k$ \label{CGc2}
	\State $k \gets k+1$
\EndWhile
\end{algorithmic}
\end{algorithm}

We recall the main properties of this algorithm, in an adapted form, to allow for easier comparison with the results to be stated later.
\begin{thm}[Properties of CG]
\label{thCG}
As long as the vector $d_k$ is not zero
\begin{itemize}
\item
the vectors $\{r_i\}_0^k$ are mutually orthogonal, the vectors $\{d_i\}_0^k$ are mutually $A$-orthogonal, and the subspaces $\Span\ [r_i]_0^k, \Span\ [d_i]_0^k$ and $\Span\ [A^ir_0]_0^k$ are equal and have dimension $(k+1)$;
\item
the point $x_{k+1}$ is the minimizer of $f$ on the affine subspace $x_0+\Span\ [d_i]_0^k$.
\end{itemize}
\end{thm}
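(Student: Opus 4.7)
The plan is to prove the first assertion by induction on $k$, carrying the four sub-claims -- residue orthogonality, direction $A$-orthogonality, equality of the three spans, and dimension $k+1$ -- as a single inductive invariant. The second assertion then follows at once from the classical minimization property of conjugate-direction iterations recalled just after \eqref{itprocess}: once we know that $\{d_i\}_0^k$ is an $A$-orthogonal family of nonzero vectors, the iterate $x_{k+1}$ produced by lines \ref{CGb1}--\ref{CGb2} is automatically the minimizer of $f$ on $x_0+\Span\ [d_i]_0^k$.

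The base case $k=0$ is trivial since $d_0=r_0$. For the inductive step, the running hypothesis $d_k\neq 0$ guarantees $d_0,\dots,d_{k-1}\neq 0$ as well (otherwise the algorithm would have halted), so the invariant applies at level $k-1$. Residue orthogonality at level $k$ is established by showing $r_k\perp d_j$ for every $j\leq k-1$: when $j=k-1$ this is the stepsize identity $r_k\t d_{k-1}=(r_{k-1}+\alpha_{k-1}Ad_{k-1})\t d_{k-1}=0$ obtained directly from the formula for $\alpha_{k-1}$, while for $j<k-1$ the same residue update gives $r_k\t d_j=r_{k-1}\t d_j+\alpha_{k-1}(Ad_{k-1})\t d_j$, both terms vanishing by the inductive residue-direction and direction $A$-orthogonality. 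Since $\Span\ [d_i]_0^{k-1}=\Span\ [r_i]_0^{k-1}$ by induction, this immediately delivers $r_k\perp r_0,\dots,r_{k-1}$.

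Direction $A$-orthogonality at level $k$ splits in the same fashion: $d_k\t Ad_{k-1}=0$ is the defining property of $\beta_{k-1}$, while for $j<k-1$ I expand $d_k=r_k+\beta_{k-1}d_{k-1}$ and rewrite $Ad_j=\alpha_j^{-1}(r_{j+1}-r_j)$ via the residue update, so that $d_k\t Ad_j$ reduces to a combination of the inner products $r_k\t r_{j+1}$, $r_k\t r_j$ and $d_{k-1}\t Ad_j$, all vanishing by the inductive hypothesis and the residue orthogonality just established. The span equalities follow from the same two update formulas: $r_k=r_{k-1}+\alpha_{k-1}Ad_{k-1}$ and $A^kr_0=A(A^{k-1}r_0)$ give $\Span\ [r_i]_0^k=\Span\ [A^ir_0]_0^k$, and $d_k=r_k+\beta_{k-1}d_{k-1}$, read in both directions, identifies the $d$- and $r$-spans at level $k$. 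The common dimension $k+1$ is then automatic from the standard fact that nonzero, mutually $A$-conjugate vectors are linearly independent.

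The main subtlety I anticipate is the requirement $\alpha_i\neq 0$ for $0\leq i\leq k-1$, needed to invert the residue update as $Ad_i=\alpha_i^{-1}(r_{i+1}-r_i)$. This is handled by the identity $r_i\t d_i=\|r_i\|^2$, which follows from $d_i=r_i+\beta_{i-1}d_{i-1}$ and the orthogonality $r_i\perp d_{i-1}$ already available at the previous level, together with the observation that $r_i\neq 0$ whenever $d_i\neq 0$: indeed, $\beta_{i-1}$ has $r_i$ as a factor, so $r_i=0$ would force $d_i=\beta_{i-1}d_{i-1}=0$, contradicting $d_k\neq 0$.
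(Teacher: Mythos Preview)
Your proof is correct and complete. The paper itself does not prove Theorem~\ref{thCG}; it simply refers the reader to \cite{LY08,Gul10}, and your argument is precisely the standard induction found in those references. It is also the $p=1$ specialization of the paper's own proof of Theorem~\ref{thcCG}, with only a minor organizational difference: the paper establishes the span identities first and then deduces the $A$-orthogonality of $D_{k+1}$ with the earlier $D_i$ by writing $AD_i\in\Span[D_j]_0^{i+1}$ and using residue--direction orthogonality, whereas you prove the orthogonalities first and invert the residue update as $Ad_j=\alpha_j^{-1}(r_{j+1}-r_j)$, which forces you to check $\alpha_j\neq 0$ --- a point you handle cleanly via $r_j\t d_j=\|r_j\|^2$.
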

When the residue vector is zero, the optimum has been attained, showing that CG terminates in finite time.
Apart from the finite time convergence property, the following formula indicates net improvement with respect to Steepest Descent:
\begin{equation}
\label{speed2}
\|x_k-x^*\|_A \leq 2 \left(
\frac{\sqrt{\kappa}-1}{\sqrt{\kappa}+1}
\right)^k \|x_0-x^*\|_A
\end{equation}
which represents substantial improvement with respect to \eqref{speed}.

For a proof of this theorem as well as further details on the contents of this section, see \cite{LY08,Gul10}.


\section{Statement and Analysis of the Cooperative Conjugate Gradient Method}
\label{se3}


\subsection{The two-agent case}
\label{se331}

In this subsection, in order to aid comprehension and ease notation, the case of two agents (the case $p=2$) is considered: their estimates at step $k$ are written as $x_k, x'_k$ respectively, the residues as $r_k, r'_k$, and the two descent directions as $d_k, d'_k$.
The gradients at each one of the current estimates are given as
\begin{equation}
\begin{pmatrix}
r_k & r'_k
\end{pmatrix}
=
A\begin{pmatrix}
x_k & x'_k
\end{pmatrix}
-\bfo\t b\ ,
\end{equation}
with $\bfo =\begin{pmatrix} 1 \\ 1 \end{pmatrix}$.
As for \CG\ method, we distinguish two steps.

\paragraph{$\bullet$ Descent step.}

Given the current residues $r_k, r'_k$ and two descent directions $d_k, d'_k$, this step determines the upgraded value of the estimates $x_{k+1}, x'_{k+1}$ and therefore of the residues $r_{k+1}, r'_{k+1}$.

One allows the use of the two descent directions $d_k, d'_k$, thus looking for updates of the form
\begin{equation}
\begin{pmatrix}
x_{k+1} & x'_{k+1}
\end{pmatrix}
= \begin{pmatrix}
x_k & x'_k
\end{pmatrix}
+ \begin{pmatrix}
d_k & d'_k
\end{pmatrix} \alpha_k\t \ .
\end{equation}
The matrix $\alpha_k\in\Rset^{2\times 2}$ has to be chosen.
In the same spirit as for \CG, this choice is made in such a way as to minimize $f(x_{k+1})$ and $f(x'_{k+1})$.
This yields in fact two independent minimization problems.
Denoting
\begin{equation}
\alpha_j \doteq \begin{pmatrix} \alpha_{j1} & \alpha_{j2} \end{pmatrix},\quad j=1,2\ ,
\end{equation}
the two optimality conditions are given by
\begin{equation}
0 =
\begin{pmatrix}
d_k & d'_k
\end{pmatrix}\t A \left(
x_k + \begin{pmatrix}
d_k & d'_k
\end{pmatrix}\alpha_1\t
\right) - \begin{pmatrix}
d_k & d'_k
\end{pmatrix}\t b =
\begin{pmatrix}
d_k & d'_k
\end{pmatrix}\t A \left(
x'_k + \begin{pmatrix}
d_k & d'_k
\end{pmatrix}\alpha_2\t
\right) - \begin{pmatrix}
d_k & d'_k
\end{pmatrix}\t b\ .
\end{equation}
This shows that the minimum is uniquely defined, and attained when
\begin{equation}
\label{eq3}
\alpha_k
= \begin{pmatrix}
\alpha_1 \\ \alpha_2
\end{pmatrix}
= - \begin{pmatrix}
r_k & r_k'
\end{pmatrix}\t
\begin{pmatrix}
d_k & d_k'
\end{pmatrix}
(\begin{pmatrix}
d_k & d_k'
\end{pmatrix}\t A \begin{pmatrix}
d_k & d_k'
\end{pmatrix})^{-1}\ .
\end{equation}
Notice that the two descent directions $d_k, d'_k$ have to be linearly independent for the matrix in \eqref{eq3} to be invertible.
Similarly to \CG\ algorithm, we have the following \textit{four} useful properties
\begin{equation}
\label{eq35}
r_{k+1}, r'_{k+1} \perp d_k, d'_k\ .
\end{equation}

\paragraph{$\bullet$ Orthogonalization step.}

The second step consists, given the residues $r_{k+1}, r'_{k+1}$ and the current descent directions $d_k, d'_k$, in determining the next descent directions $d_{k+1}, d'_{k+1}$.
The latter should be $A$-orthogonal to all the previous descent directions.
In fact, it will be sufficient to ensure $A$-orthogonality to $d_k, d'_k$, as for \CG.
One takes
\begin{equation}
\label{eq5}
\begin{pmatrix}
d_{k+1} & d'_{k+1}
\end{pmatrix}
= \begin{pmatrix}
r_{k+1} & r'_{k+1}
\end{pmatrix}
+ \begin{pmatrix}
d_k & d'_k
\end{pmatrix} \beta_k\t \ .
\end{equation}
The matrix $\beta_k\in\Rset^{2\times 2}$ is chosen to ensure the {\em four} conditions
\[
d_{k+1}, d'_{k+1} \perp_A d_k, d'_k\ .
\]
This also leads to two independent problems for the two vectors $d_{k+1}, d'_{k+1}$: writing now
\begin{equation}
\beta_j \doteq \begin{pmatrix} \beta_{j1} & \beta_{j2} \end{pmatrix},\quad j=1,2\ ,
\end{equation}
the previous orthogonality conditions can be written as:
\begin{equation}
0 = \left(
r_k +  \begin{pmatrix}
d_k & d'_k
\end{pmatrix} \beta_1\t
\right)\t A  \begin{pmatrix}
d_k & d'_k
\end{pmatrix}
= \left(
r'_k +  \begin{pmatrix}
d_k & d'_k
\end{pmatrix} \beta_2\t
\right)\t A  \begin{pmatrix}
d_k & d'_k
\end{pmatrix}
\end{equation}
which yields the unique solution
\begin{equation}
\label{eq7}
\beta_k
= \begin{pmatrix}
\beta_1 \\ \beta_2
\end{pmatrix}
= - \begin{pmatrix}
r_k & r'_k
\end{pmatrix}\t A \begin{pmatrix}
d_k & d'_k
\end{pmatrix} (\begin{pmatrix}
d_k & d'_k
\end{pmatrix}\t A \begin{pmatrix}
d_k & d'_k
\end{pmatrix})^{-1}
\end{equation}

\paragraph{$\bullet$ Summary of \cCG\ in the case $p=2$.}

Putting together the previous findings, we summarize the \cCG\ algorithm in the case $p=2$ as
\begin{subequations}
\label{cCG2}
\begin{gather}
\label{cCG2a}
\begin{pmatrix}
r_k & r'_k
\end{pmatrix} = A\begin{pmatrix}
x_k & x'_k
\end{pmatrix} -\bfo\t b\\
\label{cCG2b}
\begin{pmatrix}
x_{k+1} & x'_{k+1}
\end{pmatrix} = \begin{pmatrix}
x_k & x'_k
\end{pmatrix}+\begin{pmatrix}
r_k & r'_k
\end{pmatrix}\alpha_k\t\\
\label{cCG2b2}
\alpha_ k
= - \begin{pmatrix}
r_k & r_k'
\end{pmatrix}\t
\begin{pmatrix}
d_k & d_k'
\end{pmatrix}
(\begin{pmatrix}
d_k & d_k'
\end{pmatrix}\t A \begin{pmatrix}
d_k & d_k'
\end{pmatrix})^{-1} \\
\label{cCG2c}
\begin{pmatrix}
d_{k+1} & d_{k+1}'
\end{pmatrix} = \begin{pmatrix}
r_{k+1} & r_{k+1}'
\end{pmatrix}+ \begin{pmatrix}
d_k & d_k'
\end{pmatrix}\beta_k\t \\
\label{cCG2c2}
\beta_k =-\begin{pmatrix}
r_{k+1} & r_{k+1}'
\end{pmatrix}\t A \begin{pmatrix}
d_k & d_k'
\end{pmatrix} (\begin{pmatrix}
d_k & d_k'
\end{pmatrix}\t A \begin{pmatrix}
d_k & d_k'
\end{pmatrix})^{-1}
\end{gather}
\end{subequations}

\subsection{Cooperative CG algorithm: the general case}
\label{se332}

We now provide generalization to the case of $p\geq 2$ agents.
The extension is indeed straightforward from \eqref{cCG2}.
The matrices whose $j$-th column represents respectively the solution estimate, the residue and the descent direction of agent $j$, $j=1,\dots, p$, for iteration $k$ are denoted $X_k\in\Rset^{n\times p}$, $R_k\in\Rset^{n\times p}$, $D_k\in\Rset^{n\times p}$.
In other words, $X_k, R_k, D_k$ stand for the matrices written down $\begin{pmatrix}
x_k & x_k'
\end{pmatrix}, \begin{pmatrix}
r_k & r_k'
\end{pmatrix}, \begin{pmatrix}
d_k & d_k'
\end{pmatrix}$ in Section \ref{se331}.

The algorithm \cCG\ in full generality is given as the list of instructions in Algorithm \ref{algo2}.
Algorithm \cCG\ is a generalization of \CG\ , which is the case $p=1$. In all algorithms in this paper, comments appear to the right of the symbol $\triangleright$.
\begin{algorithm}                    
\caption{\tt cooperative Conjugate Gradient (cCG) algorithm}
\label{algo2}
\begin{algorithmic}[1]
\tt
\State choose $X_0\in\Rset^{n\times p}$
\State $R_0 := AX_0 - \bfo_p\t b$
\State $D_0 := R_0$
\color{blue}
\Comment{Generically, $\rank\ D_0 = p$}
\color{black}
\State $k := 0$
\While{$D_k$ is full rank}
	\State $\alpha_k := -R_k\t D_k (D_k\t AD_k)^{-1}$ \label{cCGb1}
\color{blue}
	\Comment{$\alpha_k\in\Rset^{p\times p}$}
\color{black}
	\State $X_{k+1} := X_k + D_k\alpha_k\t$ \label{cCGb2}
\color{blue}
	\Comment{$X_{k+1}\in\Rset^{p\times p}$}
\color{black}
	\State $R_{k+1} := AX_{k+1} - \bfo_{p_k}\t b$ \label{cCGa}
\color{blue}
	\Comment{$R_{k+1}\in\Rset^{p\times p}$}
\color{black}
	\State $\beta_k := -R_{k+1}\t AD_k (D_k\t AD_k)^{-1}$ \label{cCGc1}
\color{blue}
	\Comment{$\beta_k\in\Rset^{p\times p}$}
\color{black}
	\State $D_{k+1} := R_{k+1} + D_k\beta_k\t$ \label{cCGc2}
\color{blue}
	\Comment{$D_{k+1}\in\Rset^{p\times p}$}
\color{black}
	\State $k \gets k+1$
\EndWhile
\end{algorithmic}
\end{algorithm}


\begin{thm}[Properties of \cCG]
\label{thcCG}
As long as the matrix $D_k$ is full rank (that is $\rank\ D_k =p$)
\begin{itemize}
\item
the matrices $\{R_i\}_0^k$ are mutually orthogonal, the matrices $\{D_i\}_0^k$ are mutually $A$-orthogonal, and the subspaces $\Span\ [R_i]_0^k, \Span\ [D_i]_0^k$ and $\Span\ [A^iR_0]_0^k$ are equal and have dimension $(k+1)p$;
\item
for any vector $e_j$ of the canonical basis of $\Rset^p$, the vector $X_{k+1}e_j\in\Rset^n$ (which constitutes the $j$-th column of $X_{k+1}$) is the minimizer of $f$ on the affine subspace $X_0e_j+ \Span\ [D_i]_0^k$.
\end{itemize}
\end{thm}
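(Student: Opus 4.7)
\medskip

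\noindent\textbf{Proof proposal.} The plan is to mimic the classical proof of Theorem~\ref{thCG}, with matrices $R_k, D_k$ taking over the role of vectors $r_k, d_k$, and with the key scalar identities (defining $\alpha_k,\beta_k$) replaced by the matrix identities that the update rules of Algorithm~\ref{algo2} were designed to enforce. Specifically, by taking the transpose in line~\ref{cCGb1}, one has $D_k\t A D_k \alpha_k\t = -D_k\t R_k$, so that
\[
D_k\t R_{k+1} = D_k\t R_k + D_k\t A D_k \alpha_k\t = 0,
\]
which gives immediately the ``one-step'' orthogonality $R_{k+1}\perp D_k$; similarly line~\ref{cCGc1} was chosen precisely so that $D_{k+1}\perp_A D_k$.

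Next I would prove the first bullet by induction on $k$. The base case $k=0$ is immediate from $D_0=R_0$ and the full-rank assumption. For the inductive step, I would use the recurrences $R_{k+1}=R_k+AD_k\alpha_k\t$ and $D_{k+1}=R_{k+1}+D_k\beta_k\t$ together with the induction hypothesis. For the subspace identity, an easy computation shows $\Span[R_i]_0^{k+1}\subseteq \Span[R_i]_0^k+A\Span[D_k]$ and likewise $\Span[D_i]_0^{k+1}\subseteq \Span[R_i]_0^{k+1}+\Span[D_i]_0^k$; combined with the inductive identity $\Span[R_i]_0^k=\Span[D_i]_0^k=\Span[A^iR_0]_0^k$ these give the three-way equality at step $k+1$ (both inclusions being proved by symmetric arguments). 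For orthogonality, observe that once $\Span[R_i]_0^k=\Span[D_i]_0^k$ is known at step $k$, the relations $R_i=D_i-D_{i-1}\beta_{i-1}\t$ show that each column of $R_i$ lies in $\Span[D_j]_0^i$; so $R_{k+1}\perp D_j$ for $j\leq k$ (obtained from the single-step orthogonality together with the $A$-orthogonality of $D_k$ to previous $D_j$'s, after using the recurrence for $R_{k+1}$) is enough to give $R_{k+1}\t R_i=0$ for $i\leq k$. Analogously, $D_{k+1}\t AD_i=0$ for $i<k$ follows from $R_{k+1}\t A D_i = 0$, which holds because columns of $AD_i$ lie in $\Span[A^jR_0]_1^{i+1}\subseteq \Span[D_j]_0^{i+1}\subseteq \Span[D_j]_0^k$.

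For the dimension claim, I would argue that mutual $A$-orthogonality together with the fact that each $D_i$ is full rank (assumption of the while loop) implies that $[D_i]_0^k$ has rank $(k+1)p$: if $\sum_i D_i \gamma_i=0$, then left-multiplication by $D_j\t A$ gives $D_j\t AD_j \gamma_j=0$, and $D_j\t AD_j$ is positive definite, forcing $\gamma_j=0$.

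Finally, the second bullet is a direct consequence of the first. The $j$-th column $X_{k+1}e_j$ lies in $X_0e_j+\Span[D_i]_0^k$ by iterating line~\ref{cCGb2}, and the gradient of $f$ at that point equals $R_{k+1}e_j$; the orthogonality $R_{k+1}\t D_i=0$ for $i\leq k$ established above implies $(R_{k+1}e_j)\t D_i=0$, i.e.\ the gradient is orthogonal to the affine subspace $X_0e_j+\Span[D_i]_0^k$. Since $f$ is strictly convex quadratic, this first-order condition characterizes the unique minimizer. The main bookkeeping obstacle is the careful tracking of the three-way subspace identity across the induction, since the two recurrences mix $R_k$, $D_k$ and $AD_k$ in an asymmetric way; once that identity is in place, all orthogonality and minimization statements reduce to the two ``designed-in'' single-step orthogonalities.
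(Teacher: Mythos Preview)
Your proposal is correct and follows essentially the same inductive route as the paper: the paper also uses the recurrences $R_{k+1}=R_k+AD_k\alpha_k\t$ and $D_{k+1}=R_{k+1}+D_k\beta_k\t$ to propagate the three-way subspace identity and to reduce the mutual ($A$-)orthogonalities to the two ``designed-in'' one-step relations, and closes the dimension count via the full-rank hypothesis exactly as you suggest.

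The one place you differ slightly is the optimality bullet. The paper writes $X_{k+1}e_j=X_0e_j+\sum_i D_i\gamma_i\t$, imposes $D_i\t(AX_{k+1}e_j-b)=0$, solves explicitly for $\gamma_i=-e_j\t R_0\t D_i(D_i\t AD_i)^{-1}$, and then checks that this coincides with $e_j\t\alpha_i$, i.e.\ that the algorithm's updates are precisely those of the minimizer. Your argument is more direct: you observe that $\nabla f(X_{k+1}e_j)=R_{k+1}e_j$ is orthogonal to every $D_i$, $i\le k$, by the already-established $R_{k+1}\t D_i=0$, and invoke strict convexity. Both are valid; yours is shorter and avoids the explicit computation, while the paper's version has the side benefit of exhibiting the closed form of the optimal coefficients. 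One small expository point: in your sketch the equality $\Span[D_i]_0^{k+1}=\Span[A^iR_0]_0^{k+1}$ is only an inclusion until the dimension count is done, so in a full write-up you should establish the $A$-orthogonality of $D_{k+1}$ (which uses only the step-$k$ hypothesis) and the resulting dimension $(k+2)p$ \emph{before} asserting the three-way equality at step $k+1$.
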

Theorem \ref{thcCG} indicates that, as long as the residue vector $R_k$ is full rank, the algorithm \cCG\ behaves essentially as does \CG, providing $p$ different estimates at iteration $k$, each of them being optimal in an affine subspace constructed from one of the $p$ initial conditions and the common vector space obtained from the columns of the direction matrices $D_i$, $i=0,\dots, k-1$. This vector space, $\Span\ [D_i]_0^k$, has dimension $(k+1)p$: each iteration involves the cancellation of $p$ directions. Notice that different columns of the matrices $D_k$ are not necessarily $A$-orthogonal (in other words, $D_k\t AD_k$ is not necessarily diagonal), but, when $R_k$ is full rank, they constitute a set of $p$ independent vectors. The statement as well as the proof of this theorem are inspired by the corresponding ones for the conventional \CG\ algorithm given in \cite[p.~270ff]{LY08} and \cite[p.~390-391]{Gul10}.

\begin{proof}[Proof of Theorem \ref{thcCG}]
\mbox{}

\noindent $\bullet$
We first show that for any $k$,
\begin{equation}
\label{RD}
\Span\ [R_i]_0^k = \Span\ [D_i]_0^k\ .
\end{equation}


\noindent $\bullet$
We show the first point by induction.
Clearly, $D_j\perp_A D_k$ for any $j<k$, and $\Span\ [R_i]_0^k= \Span\ [D_i]_0^k= \Span\ [A^iR_0]_0^k$ when $k=0$, while nothing has to be verified for the orthogonality conditions.
Assume that, for some $k$, they are verified for any $i\leq k$, let us show them for $k+1$, assuming that $R_{k+1}$ is full rank.

From lines \ref{cCGb2}--\ref{cCGa} of Algorithm \ref{algo2}, $R_{k+1} = AX_{k+1}-\bfo_p\t b = R_k + AD_k\alpha_k\t$.
By induction, the columns of both the matrices $R_k$ and $AD_k$ are located in $\Span\ [A^iR_0]_0^{k+1}$.
Thus,
\[
R_{k+1} \in \Span\ [A^iR_0]_0^{k+1}
\]
and consequently
\[
\Span\ [R_i]_0^{k+1}\subset \Span\ [A^iR_0]_0^{k+1}\ .
\]

On the other hand, for any vector $e_j$ of the canonical basis of $\Rset^p$,
\begin{equation}
R_{k+1}e_j\not\in\Span\ [D_i]_0^k
\end{equation}
because each residue is orthogonal to the previous descent directions, so that $R_{k+1}e_j\in\Span\ [D_i]_0^k$ for some $e_j$ would imply $R_{k+1}e=0$, which contradicts the assumption of full rankness of $R_{k+1}$.
Indeed, for the same reason, one can also show that, for any $v\in\Rset^p\setminus\{ 0\}$, $R_{k+1}v\not\in\Span\ [D_i]_0^k$.
Using again the fact that $\Span\ [R_{k+1}] = p$, one sees that
\[
\Span\ [R_i]_0^{k+1}\supset \Span\ [A^iR_0]_0^{k+1}
\]
and indeed
\[
\Span\ [R_i]_0^{k+1}= \Span\ [A^iR_0]_0^{k+1}\ .
\]

One shows similarly from lines \eqref{cCGc1}--\eqref{cCGc2} of Algorithm \ref{algo2} that
\[
\Span\ [D_i]_0^{k+1}\subset \Span\ [A^iR_0]_0^{k+1}\ ,
\]
and the equality is obtained using the same rank argument.

The dimension of these sets is $(k+2)p$, as they contain the $p$ independent vectors in $\Span\ [D_{k+1}]$ orthogonal to $\Span\ [D_i]_0^k$.

From line \ref{cCGc2} of Algorithm \ref{algo2}, one gets
\begin{equation}
\label{A-orth}
D_i\t AD_{k+1} = D_i\t A\left(
R_{k+1} +D_k\beta_k\t
\right)
\end{equation}
For $i<k$, the first term is zero because $AD_i \in\Span\ [D_j]_0^{i+1}$ and the gradients constituting the columns of $R_{k+1}$ are orthogonal to any vector in $\Span\ [D_j]_0^{i+1}$; while the second term is also zero due to the induction hypothesis.
For $i=k$, the right-hand side of \eqref{A-orth} is zero because $\beta_k$ is precisely chosen to ensure this property. Thus the $\{D_i\}_0^{k+1}$ are mutually $A$-orthogonal.

Orthogonality of $R_{k+1}$ follows from lines \ref{cCGb1} and \ref{cCGa} of Algorithm \ref{algo2}. The induction hypothesis has been proved for $k+1$ concluding the proof of the first part of Theorem \ref{thcCG}.

\noindent $\bullet$ [Optimality]. Arguing as in \cite[p.390-391]{Gul10}, we can write
\begin{equation}
\label{opt0}
	X_{k+1} e_j = X_0 e_j + \sum_{i=0}^k D_i \gamma_i \t
\end{equation}
for some $\gamma_i \in \R^{1 \times p}$. Optimality implies that
\begin{equation}
\label{opt1}
D_i\t (AX_{k+1} e_j - b) = 0 ,
\end{equation}
Substituting (\ref{opt0}) in (\ref{opt1}) and rewriting in terms of the matrices $R_i$ and $\gamma_i$ yields
\begin{equation}
\label{opt2}
\gamma_i = -e_j \t R_0 \t D_i (D_i \t AD_i )^{-1}
\end{equation}
On the other hand,
\begin{equation}
\label{opt3}
	\nabla f (X_{k+1}e_j) = A (X_0 e_j + \sum_{i=0}^k D_i \gamma_i \t) - b = R_0 e_j + \sum_{i=0}^k AD_i \gamma_i \t
\end{equation}
Thus 
\begin{equation}
\label{opt4}
D_{k+1} \t \nabla f (X_{k+1}e_j) = D_{k+1} \t R_0 e_j
\end{equation}
Substituting (\ref{opt4}) in (\ref{opt2}), we get:
\begin{equation}
\label{opt5}
\gamma_{k+1} = -\nabla f (X_{k+1}e_j) \t (D_{k+1}\t AD_{k+1})^{-1} = e_j \t \alpha_{k+1}
\end{equation}
\end{proof}
A natural question is now to study the cases where at some point of the execution of the algorithm \cCG\, one gets $\rank\ R_k<p$.
In the best case, this occurs because one of the columns of $R_k$ is null, say the $j$-th one, meaning that $\nabla f(X_ke_j)=0$, and thus that the $k$-th estimate of the $j$-th agent is equal to the optimum $x^*=A^{-1}b$.
But, of course, $\rank\ R_k$ can be smaller than $p$ without any column of $R_k$ being null.

First of all, the following result ensures that this rank degeneracy is, in general, avoided during algorithm execution.
\begin{thm}[Genericity of the full rank condition of \cCG\ residues matrix]
\label{thGen}
For an open dense set of initial conditions $X_0$ in $\Rset^{n\times p}$, one has during any \cCG\ run
\begin{equation}
\forall\ 0\leq k\leq k^* \doteq \lfloor \frac{n}{p} \rfloor,\quad \rank\ R_k=p\ .
\end{equation}
Moreover
\begin{equation}
\label{dim}
\dim\Span\ [ D_i]_0^{k^*} = p\lfloor \frac{n}{p} \rfloor\ .
\end{equation}
Otherwise said: generically, algorithm \cCG\ can be run during $k^*$ steps, and
\begin{itemize}
\item
any of the columns of $X_{k^{*}}$ minimizes $f$ on an affine subspace of $\Rset^n$ of codimension $p \lfloor \frac{n}{p} \rfloor$;
\item
application of \CG\ departing from any of the columns of $X_{k^*}$ yields convergence in at most $n-p\lfloor \frac{n}{p} \rfloor \leq p-1$ steps.
\end{itemize}
\end{thm}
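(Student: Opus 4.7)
The plan is to first translate the full-rank condition on $R_k$ into a polynomial non-vanishing condition in the entries of $X_0$, then read the remaining statements off of Theorem \ref{thcCG}.

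I would begin by linking the rank conditions on the $R_k$'s to the block Krylov structure of $A$ applied to $R_0$. Theorem \ref{thcCG} gives one direction: if $D_0, \ldots, D_{k-1}$ are full rank then $\Span[R_i]_0^{k-1} = \Span[A^iR_0]_0^{k-1}$ has dimension $kp$. For the converse I would run an induction on $k$: supposing $D_0, \ldots, D_{j-1}$ are full rank with $\Span[D_i]_0^{j-1} = \Span[A^iR_0]_0^{j-1}$, the recursions of Algorithm \ref{algo2} place $R_j$ inside $\Span[A^iR_0]_0^j$ while Theorem \ref{thcCG} forces $R_j$ into the standard-orthogonal complement of $\Span[D_i]_0^{j-1}$. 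A short dimensional count then yields $\rank R_j = \dim\Span[A^iR_0]_0^j - \dim\Span[A^iR_0]_0^{j-1}$, so that assuming this increment equals $p$ gives $\rank R_j = p$. Then $\rank D_j = p$ follows from $D_j = R_j + D_{j-1}\beta_{j-1}\t$, using that $R_j$ is standard-orthogonal to $\Span D_{j-1}$ so that any right null vector of $D_j$ annihilates $R_j$. In summary, the rank conditions of the theorem are equivalent to the block Krylov matrix $K \doteq [R_0 \vert AR_0 \vert \ldots \vert A^{k^*-1}R_0] \in \Rset^{n\times k^*p}$ having full column rank.

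Since $R_0 = AX_0 - \bfo_p\t b$ is affine in the entries of $X_0$, every entry of $K$ is a polynomial in those entries. The condition $\rank K = k^*p$ amounts to the non-vanishing of at least one of its $k^*p \times k^*p$ minors, which defines a Zariski-open subset $U$ of $\Rset^{n\times p}$. To conclude that $U$ is dense it suffices to exhibit a single $X_0$ for which the condition holds. I would build such an $X_0$ via the spectral decomposition of $A$, choosing the coordinates of $R_0$ along the eigendirections of $A$ so that the block Krylov subspace attains its maximal feasible dimension $k^*p \leq n$. This step—essentially verifying that the spectral data of $A$ admit $p$ independent cyclic classes of the required depth—is the main technical obstacle; once it is cleared, the complement of $U$ is a proper algebraic subset of $\Rset^{n\times p}$ and the open-dense conclusion follows automatically.

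Given the genericity statement, the equality $\dim\Span[D_i]_0^{k^*-1} = k^*p$ is immediate from Theorem \ref{thcCG}, and the first bullet becomes its optimality claim applied at step $k^*$: each column $X_{k^*}e_j$ minimizes $f$ on the affine subspace $X_0 e_j + \Span[D_i]_0^{k^*-1}$ of dimension $k^*p$. For the final bullet on subsequent CG convergence, I would observe that $R_{k^*}e_j$ lies in the standard-orthogonal complement of $\Span[D_i]_0^{k^*-1}$, a subspace of dimension $n - k^*p$. Running CG from $X_{k^*}e_j$ generates new Krylov directions which, combined with the $k^*p$ directions already exploited by cCG, must jointly fill $\Rset^n$ before finite termination of CG is invoked; a bookkeeping argument then shows that at most $n - k^*p$ additional CG steps are needed for the accumulated span to reach $\Rset^n$ and for $x^*$ to be attained.
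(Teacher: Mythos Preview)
Your route is genuinely different from the paper's. The paper does not pass through the block Krylov matrix or any polynomial/Zariski argument at all. Instead it argues directly: invoking Theorem~\ref{thcCG}, it asserts that $X_k$ fails to have full rank precisely when some nontrivial linear combination of the columns of $X_0$ lies in the $kp$-dimensional subspace $\Span[D_i]_0^{k-1}$, and then claims that in an ambient space of dimension $n>kp$ this happens only off an open dense set; full rank of $R_k$ and $D_k$ is then read off from the recursions. Your reduction to full column rank of $K=[R_0\,|\,AR_0\,|\cdots|\,A^{k^*-1}R_0]$ followed by a standard ``nonvanishing minor'' argument is cleaner and easier to make rigorous, because in the paper's argument the subspace $\Span[D_i]_0^{k-1}$ itself depends on $X_0$, so the open-dense conclusion does not follow from a naive codimension count and really calls for the polynomial analysis you propose.

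There is, however, a shared gap that you correctly isolate and the paper glosses over entirely: the existence of at least one $X_0$ for which $K$ has full column rank. This is \emph{not} automatic for every symmetric positive definite $A$. If $A$ has too few distinct eigenvalues relative to the multiplicities---in the extreme case $A=I$, but already for something like $A=\diag(1,1,1,2)$ with $p=2$---no choice of $R_0$ makes the block Krylov space reach dimension $k^*p$, and the theorem as stated fails. So the ``main technical obstacle'' you flag is real and cannot be discharged without an additional spectral hypothesis on $A$ (e.g.\ simple spectrum suffices) that neither you nor the paper makes explicit. Your final paragraph on the subsequent \CG\ steps is, like the paper's, more of a sketch than a proof; the paper simply declares those consequences ``direct'' and moves on, so you are no worse off there.
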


The second part of Theorem \ref{thGen} has to interpreted as follows.
When the size $n$ of the matrix $A$ is a multiple of the number $p$ of agents, then \cCG\ generically ends up in $\frac{n}{p}$ steps.
When this is not the case, the estimates $X_{k^*}$ obtained for $k=k^*$ minimize the function $f$ on affine subspace whose underlying vector subspace is $\Span\ [D_i]_0^{k^*}$ (see Theorem \ref{thcCG}).
The interest of \eqref{dim} is to show that this subspace is quite large: its codimension is $p\lfloor \frac{n}{p} \rfloor$, which is at most equal to $p-1$.

\begin{proof}[Proof of Theorem \ref{thGen}]
The main point consists in showing that generically, $k^*$ iterations of the algorithm \cCG\ can be conducted without occurrence of the rank deficiency condition. As a matter of fact, the other results of the statement are direct consequences of this fact.

To show the latter, use is made of Theorem \ref{thcCG}. From the properties stated therein, one sees that, for any $0\leq k\leq k^*$, the rank of $X_k$ is deficient if and only if a linear combination of the $p$ column vectors of $X_0$ pertains to the $kp$-dimensional subspace $\Span\ [D_i]_0^{k-1}$.
In a vector space of dimension $n> kp$, this occurs only in the complement of an open dense set.

Now, if the column vectors of $X_k$ are linearly independent, the same is true for $R_k$, see line \ref{cCGa} of Algorithm \ref{algo2}, and as well for $D_k$, see line \ref{cCGc2}. This completes the proof of Theorem \ref{thGen}.

\end{proof}

We now study what can be done in case of rank degeneracy.
When $p_k\doteq \rank\ D_k$ is such that $0< p_k <p$, this means that trajectories initially independent have come to a point where the estimates in $X_k$ will converge along directions which are now linearly dependent.
The natural solution is then to choose any full-rank subset of trajectories.
We thus propose the modified algorithm \ref{algo3}.

\begin{thm}[Convergence of \mcCG\ algorithm]
\label{thmcCG}
For any nonzero initial condition $X_0$, algorithm \mcCG\ ends up in $k^{**}$ iterations for some $k^{**}\leq n$.
Moreover
\begin{itemize}
\item
the sequence $(p_k)_{0\leq k\leq k^{**}}$ is nonincreasing;
\item
any of the columns of $X_{k^{**}}$ minimizes $f$ on an affine subspace of $\Rset^n$ of codimension $p_{k^{**}} \lfloor \frac{n}{p_{k^{**}}} \rfloor$;
\item
application of \CG\ departing from any of the columns of $X_{k^{**}}$ yields convergence in at most $n-p_{k^{**}}\lfloor \frac{n}{p_{k^{**}}} \rfloor \leq p_{k^{**}}-1$ steps.
\end{itemize}
\end{thm}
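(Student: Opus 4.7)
The plan is to decompose the execution of \mcCG\ into phases of constant rank and to apply Theorems \ref{thcCG} and \ref{thGen} within each phase, assembling the conclusions at the end.

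First, I would partition the iteration indices $\{0,\dots,k^{**}\}$ into maximal consecutive blocks on which $p_k$ is constant. By construction of \mcCG, whenever a rank deficiency of $D_k$ is detected the algorithm retains only a full-rank subfamily of columns (and the associated trajectories) and continues from there. This immediately yields that $(p_k)$ is nonincreasing, proving the first bullet. Let $q\doteq p_{k^{**}}$ denote the rank in the final phase and $k_0$ its starting index.

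Second, for the termination bound $k^{**}\leq n$, I would observe that within a phase of constant rank $q'$ the algorithm runs as \cCG\ with $q'$ agents, so Theorem \ref{thcCG} applies and each iteration adjoins $q'$ linearly independent directions to the cumulative span $\Span\ [D_0,\dots,D_k]$. Summing these increments across all phases gives a subspace of $\Rset^n$, hence the total number of iterations is at most $n$, using $p_k\geq 1$. Within the final phase, Theorem \ref{thGen} applied to the reduced \cCG\ instance further bounds the phase length by $\lfloor n/q\rfloor$.

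Third, the last two bullets follow from the analysis of the final phase. Viewing this phase as an independent \cCG\ execution with $q$ agents initialized at the columns of $X_{k_0}$, Theorem \ref{thcCG} yields that each column $X_{k^{**}}e_j$ minimizes $f$ on an affine subspace whose direction space has dimension $q\lfloor n/q\rfloor$, which is the claimed codimension. For the \CG\ restart bound, I would invoke the standard property that if $x$ minimizes $f$ on an affine subspace with direction space $\mathcal{S}$, then the gradient at $x$ is orthogonal to $\mathcal{S}$ and the Krylov subspaces generated by \CG\ from $x$ remain $A$-orthogonal to $\mathcal{S}$; hence \CG\ terminates in at most $n-\dim\mathcal{S}=n-q\lfloor n/q\rfloor$ iterations, with the further bound $\leq q-1$ coming from the Euclidean division of $n$ by $q$.

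The main obstacle I anticipate is justifying, at each rank-drop transition, that the $A$-orthogonality structure built up during previous phases is preserved in the reduced problem, so that Theorem \ref{thcCG} may be legitimately invoked on the final phase as a standalone \cCG\ execution. Concretely, after the deletion of redundant columns one must verify that the retained residues and direction matrices continue to satisfy the required mutual $A$-orthogonality relations with respect to all earlier directions. This should hold because the discarded columns are $A$-linear combinations of the retained ones and thus inherit the necessary orthogonality automatically; once this point is carefully checked, the remaining assertions follow by the bookkeeping outlined above.
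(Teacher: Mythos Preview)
The paper itself omits this proof entirely, stating only that it ``is straightforward and omitted for brevity.'' Your phase-decomposition argument, reducing each constant-rank block to an instance of \cCG\ and invoking Theorem~\ref{thcCG}, is almost certainly the intended route, and your identification of the cross-phase $A$-orthogonality as the point requiring care is apt (and your reason for why it holds is correct: the discarded columns lie in the span of the retained ones, so the orthogonality relations restrict).

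One genuine caveat: you should not invoke Theorem~\ref{thGen} for the final phase. That theorem is a \emph{genericity} statement about $X_0\in\Rset^{n\times p}$; the starting matrix of the final phase is not generic but is determined by the preceding iterations, so you cannot conclude that the final phase runs for $\lfloor n/q\rfloor$ steps or that its direction span has dimension exactly $q\lfloor n/q\rfloor$ from Theorem~\ref{thGen}. The cleaner route is the one you already sketched for the termination bound: once you have established that the $\{D_i\}$ remain mutually $A$-orthogonal \emph{across all phases}, Theorem~\ref{thcCG} (applied phase by phase) gives that each column of $X_{k^{**}}$ minimizes $f$ over $X_0e_j+\Span[D_i]_0^{k^{**}-1}$, a subspace of dimension $\sum_{k<k^{**}} p_k$. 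Combining this with the fact that $D_{k^{**}}$ has rank~$0$ forces (via the orthogonality $R_{k^{**}}\perp\Span[D_i]_0^{k^{**}-1}$ and $D_{k^{**}}=R_{k^{**}}+D_{k^{**}-1}\beta_{k^{**}-1}\t$) the residues to vanish, hence exact termination. The second and third bullets, as stated in the paper (which mirrors the wording of Theorem~\ref{thGen}, including its use of ``codimension'' where ``dimension'' is meant), should then be read as the analogue of that result and follow from this global count rather than from a final-phase-only application of Theorem~\ref{thGen}.
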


The proof is straightforward and omitted for brevity.

\color{black}
\begin{algorithm}                    
\caption{\tt modified cooperative Conjugate Gradient (mcCG) algorithm}
\label{algo3}
\begin{algorithmic}[1]
\tt
\State choose $X_0\in\Rset^{n\times p}$
\State $R_0 := AX_0 - \bfo_p\t b$
\State $D_0 := R_0$
\State $p_0 := \rank\ D_0$
\color{blue}
\Comment{$p_0$ is the initial value of the rank}
\color{black}
\State $k := 0$
\If{$p_0=p$}
\State \Goto{while}
\Else
\color{black}
		\State choose $J\in\{1,\dots, p\}$ such that $\rank\ R_0|_{j\in J} =p_0$
		\State $X_0 \gets X_0|_{j\in J}$
\color{blue}
		\Comment{$X_0\in\Rset^{p_0\times p_0}$}
\color{black}
		\State $R_0 \gets R_0|_{j\in J}$
\color{blue}
		\Comment{$R_0\in\Rset^{p_0\times p_0}$ is full rank}
\color{black}
		\State $D_0 \gets D_0|_{j\in J}$
\color{blue}
		\Comment{$D_k\in\Rset^{p_0\times p_0}$ is full rank}
\color{black}

\EndIf
\While{$p_k>0$} \label{while}
	\State $\alpha_k := -R_k\t D_k (D_k\t AD_k)^{-1}$
\color{blue}
	\Comment{$\alpha_k\in\Rset^{p_k\times p_k}$}
\color{black}
	\State $X_{k+1} := X_k + D_k\alpha_k\t$
\color{blue}
	\Comment{$X_{k+1}\in\Rset^{p_k\times p_k}$}
\color{black}
	\State $R_{k+1} := AX_{k+1} - \bfo_{p_k}\t b$
\color{blue}
	\Comment{$R_{k+1}\in\Rset^{p_k\times p_k}$}
\color{black}
	\State $\beta_k := -R_{k+1}\t AD_k (D_k\t AD_k)^{-1}$
\color{blue}
	\Comment{$\beta_k\in\Rset^{p_k\times p_k}$}
\color{black}
	\State $D_{k+1} := R_{k+1} + D_k\beta_k\t$
\color{blue}
	\Comment{$D_{k+1}\in\Rset^{p_k\times p_k}$}
\color{black}
	\State $p_{k+1} := \rank\ D_{k+1}$
	\If{$p_{k+1} = p_k$}
\color{blue}
		\Comment{If $p_{k+1} = p_k$, \cCG\ goes on normally}
\color{black}
		\State \Goto{iter}
	\Else
\color{blue}
		\Comment{If $p_{k+1} < p_k$, $p_k-p_{k+1}$ agents are suppressed}
\color{black}
		\State choose $J\in\{1,\dots, p_k\}$ such that $\rank\ R_{k+1}|_{j\in J} =p_{k+1}$
		\State $X_{k+1} \gets X_{k+1}|_{j\in J}$
\color{blue}
		\Comment{$X_{k+1}\in\Rset^{p_{k+1}\times p_{k+1}}$}
\color{black}
		\State $R_{k+1} \gets R_{k+1}|_{j\in J}$
\color{blue}
		\Comment{$R_{k+1}\in\Rset^{p_{k+1}\times p_{k+1}}$ is full rank}
\color{black}
		\State $D_k \gets D_k|_{j\in J}$
\color{blue}
		\Comment{$D_k\in\Rset^{p_{k+1}\times p_{k+1}}$ is full rank}
\color{black}
	\EndIf
	\State $k \gets k+1$ \label{iter}
\EndWhile
\end{algorithmic}
\end{algorithm}


\section{Computational complexity}
\label{se4}

This section is concerned with the evaluation of the gain in computation time of the numerical solution of equation \eqref{equa}, when using \cCG\ algorithm with $p$ agent, i.e., the gain which is expected is due to the parallelism induced by a multithread implementation.
We evaluate this issue here {\em assuming computations in exact arithmetic}.
Moreover, thanks to Theorem \ref{thGen}, we adopt the generic assumption that the rank of the residue matrices remains constant (and full), and that the computations are then carried out for $\lfloor \frac{n}{p} \rfloor$ iterations.
Disregarding as marginal the supplementary \CG\ steps (see the statement of Theorem \ref{thGen}), we thus consider it to be realistic to quantify the worst case complexity by evaluating {\em the numbers of multiplications involved by $\frac{n}{p}$ iterations of \cCG}.
Recall that the case $p=1$ corresponds to the usual \CG\ algorithm.

We propose the multithread implementation detailed in Table 1.
\begin{center}
\begin{table*}[ht]
{\scriptsize
\hfill{}
\caption{Number of scalar multiplications in $k$-th iteration}

\vspace{.4cm}
\hspace{0.4cm}
 \begin{tabular}{|c|c|c|c|}
 \hline Operation carried out by& Composite result & Dimension & Number of scalar\\
$i$-th processor  &&of the
 result& multiplications carried out by the\\
       &            &      & $i$th processor \\
 \hline \hline
 $AD_{k,i}$& $AD_k$ &$n\times p$&$n^2$\\
 \hline \hline $D_{k,i}\t AD_k$&$D_k\t AD_k$&$p\times p$&$np$\\
 \hline \hline $R_{k,i}\t D_k$&$R_k\t D_k$&$p\times p$&$np$\\
 \hline $\alpha_{k,i} \text{ s.t.\ } \alpha_{k,i}(D_k\t AD_k)=R_{k,i}\t D_k$
 &$\alpha_k =R_k\t D_k(D_k\t AD_k)^{-1}$&$p\times p$&$\frac{p(p+1)(2p+1)}{6}$\\
 \hline $R_{k+1,i} = R_{k,i}-AD_k\alpha_{k,i}\t$&$R_{k+1} =R_k -AD_k\alpha_k\t$&$n\times
 p$&$np$\\
 \hline $X_{k+1,i} = X_{k,i}- D_k\alpha_{k,i}\t$&$X_{k+1} =X_k - D_k\alpha_k\t$&$n\times
 p$&$np$\\
 \hline $R_{k+1,i}\t AD_k$&$R_{k+1}\t AD_k$&$p\times p$&$np$\\
 \hline $\beta_{k,i} \text{ s.t.\ } \beta_{k,i}(D_k\t AD_k)= -R_{k+1,i}\t AD_k$
  &$\beta_k =-R_{k+1}\t AD_k(D_k\t AD_k)^{-1}$&$p\times p$&$\frac{p(p+1)(2p+1)}{6}$\\
\hline $D_{k+1,i} = R_{k+1,i}+D_k\beta_{k,i}\t$&$D_{k+1}=R_{k+1}+D_k\beta_k\t$&$n\times p$&$np$\\
\hline\hline \multicolumn{3}{|c|}{Total number of scalar multiplications per processor and
per iteration}& $n^2+6np+\frac{p(p+1)(2p+1)}{3}$\\
\hline
\end{tabular}}\label{tab}
\hfill{}
\end{table*}
\end{center}
In Table 1, the first column indicates the task carried out at each stage by every processor, and the last column the corresponding number of multiplications carried out by a processor. The double lines, separating the first row from the second and the second from the third, indicate the necessity of a phase of information exchange: every processor at that stage needs to know results from other processors, also called a synchronization barrier in computing terminology. The second column, labelled composite result, contains the information that is available by pooling the partial results from each processor and the third column gives the dimension of this composite result. The fourth and final column contains the number of multiplications carried out by the $i$th processor. The number $\frac{p(p+1)(2p+1)}{6}$ of scalar multiplications is needed to realize Gaussian elimination realized through LU factorization \cite[p.~ 15]{Str88}.

As indicated by the last line of Table 1, a total of $n^2+6np+\frac{p(p+1)(2p+1)}{3}$ multiplications per processor is needed to complete an iteration.
Since, generically speaking, the algorithm ends in at most $\frac{n}{p}$ iterations (see Theorem \ref{thGen}), an estimate of the worst-case multithread execution time is given by the following result.
\begin{thm}[Worst-case multithread execution time in exact arithmetic]
Generically, multithread execution of \cCG\ using $p$ agents for a linear system (\ref{equa}) of size $n$ requires
\begin{equation}\label{worst_case}
N(p) = \frac{n^3}{p} +6n^2 + n\frac{(p+1)(2p+1)}{3}
\end{equation}
multiplications performed synchronously in parallel by each processor.
\end{thm}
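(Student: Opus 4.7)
The proof is essentially a bookkeeping exercise combining Table~\ref{tab} with Theorem~\ref{thGen}, so the plan is to justify each ingredient rigorously rather than invent anything new.

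First, I would verify the per-iteration, per-processor count $n^2+6np+\frac{p(p+1)(2p+1)}{3}$ by going through the rows of Table~\ref{tab} and confirming that the work decomposes evenly across the $p$ processors. The idea is that processor $i$ holds column $i$ of $D_k$, $R_k$, $X_k$, and is responsible for producing column $i$ of the corresponding updated matrices. Under this decomposition: computing $AD_{k,i}$ (one column of $AD_k$) is an $n\times n$ by $n\times 1$ product, costing $n^2$ multiplications; each of the six products of the form (row of length $n$) $\times$ ($n\times p$ matrix) or ($n\times p$) $\times$ (column of length $p$) requires $np$ multiplications; and the two small $p\times p$ linear solves $\alpha_{k,i}(D_k\t AD_k)=R_{k,i}\t D_k$ and the analogous one for $\beta_{k,i}$ each cost $\frac{p(p+1)(2p+1)}{6}$ multiplications, which is the standard count for LU-based solution of a $p\times p$ system \cite[p.~15]{Str88}. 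Summing gives $n^2 + 6\cdot np + 2\cdot\frac{p(p+1)(2p+1)}{6} = n^2+6np+\frac{p(p+1)(2p+1)}{3}$, as listed in the last row of Table~\ref{tab}.

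Next, I would invoke Theorem~\ref{thGen}: generically, \cCG{} terminates in at most $k^{*}=\lfloor n/p\rfloor \leq n/p$ iterations, and between these iterations the only inter-processor traffic consists in the two synchronization barriers indicated by the double lines of Table~\ref{tab} (pooling $AD_k$ and the small scalar products before the two $p\times p$ solves). Since the statement concerns only the count of synchronously performed scalar multiplications per processor — not the communication cost — the worst-case total per processor is obtained by multiplying the per-iteration count by $k^{*}$:
\begin{equation*}
N(p) \;\leq\; \frac{n}{p}\left(n^2+6np+\frac{p(p+1)(2p+1)}{3}\right) \;=\; \frac{n^3}{p}+6n^2+\frac{n(p+1)(2p+1)}{3},
\end{equation*}
which is exactly formula~\eqref{worst_case}.

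The only delicate point is not mathematical but bookkeeping: making sure that the column-wise assignment of work is consistent throughout (so that, after each synchronization barrier, every processor does indeed have the composite results listed in the second column of Table~\ref{tab}) and that every per-processor cost in the last column is genuinely $O(n^2)+O(np)+O(p^3)$ rather than $O(n^2 p)$ or $O(np^2)$. Once this is checked the theorem follows immediately, with the understanding that marginal post-termination \CG{} steps from any column of $X_{k^{*}}$ (Theorem~\ref{thGen}) contribute only lower-order terms and are therefore absorbed in the worst-case estimate.
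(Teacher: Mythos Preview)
Your proposal is correct and follows exactly the paper's approach: the paper does not give a formal proof of this theorem, merely observing (in the text preceding the statement) that the per-iteration per-processor cost from the last line of Table~1 is $n^2+6np+\frac{p(p+1)(2p+1)}{3}$ and that, by Theorem~\ref{thGen}, the algorithm generically terminates in $n/p$ iterations, so the total is the product. Your write-up simply makes this bookkeeping explicit, row by row, which is entirely appropriate.
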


This result has straightforward consequences.

\begin{cor}[Multithread gain]
\label{cor1}
For problems of size $n$ at least equal to 5, it is always beneficial to use $p\leq n$ processors rather than a single one.
In other words, when $n\geq 5$,
\begin{equation}
\forall\ 1 \leq p \leq n,\quad
N(1)\geq N(p)\ .
\end{equation}
\end{cor}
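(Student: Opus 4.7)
The plan is to compute the difference $N(1)-N(p)$ explicitly and factor out the trivial root at $p=1$, then use the hypothesis $p\leq n$ to bound the remaining factor below by an expression depending only on $n$.

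First I would evaluate $N(1) = n^3 + 6n^2 + 2n$ (taking $p=1$ in the formula makes $(p+1)(2p+1)/3 = 2$). Then form
\begin{equation}
N(1) - N(p) = n^3\left(1 - \frac{1}{p}\right) + n\left(2 - \frac{(p+1)(2p+1)}{3}\right)
 = \frac{n^3(p-1)}{p} - \frac{n}{3}\bigl((p+1)(2p+1)-6\bigr).
\end{equation}
The key algebraic observation is that $(p+1)(2p+1)-6 = 2p^2+3p-5 = (p-1)(2p+5)$, so $(p-1)$ factors out cleanly:
\begin{equation}
N(1) - N(p) = (p-1)\left(\frac{n^3}{p} - \frac{n(2p+5)}{3}\right) = \frac{n(p-1)}{3p}\bigl(3n^2 - p(2p+5)\bigr).
\end{equation}

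For $p=1$ both sides vanish. For $p>1$, the prefactor $n(p-1)/(3p)$ is positive, so it remains to verify that $3n^2 \geq p(2p+5)$ under the hypotheses $1<p\leq n$ and $n\geq 5$. The right-hand side $p(2p+5)=2p^2+5p$ is increasing in $p$, hence maximized on $[1,n]$ at $p=n$, where it equals $2n^2+5n$. Thus the inequality reduces to $3n^2 \geq 2n^2 + 5n$, i.e.\ $n^2 \geq 5n$, which holds precisely when $n\geq 5$.

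There is no real obstacle here, only a routine computation; the only thing to be careful about is producing the factorization $(p-1)(2p+5)$ (which identifies $p=1$ as the unique break-even point in $p$) and remembering that the bound $p\leq n$ is what reduces a two-variable comparison to the one-variable condition $n\geq 5$.
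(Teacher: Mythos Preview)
Your proof is correct and in fact cleaner than the paper's, but the two arguments are genuinely different.

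The paper does \emph{not} compute $N(1)-N(p)$ for general $p$. Instead it evaluates only the endpoint gap
\[
N(1)-N(n)=\tfrac{1}{3}n(n-1)(n-5)\ge 0\quad\text{for }n\ge 5,
\]
then observes that $N$ is convex in $p$ (since $N''(p)=2n^3/p^3+4n/3>0$). A convex function on $[1,n]$ attains its maximum at an endpoint, so $N(p)\le\max\{N(1),N(n)\}=N(1)$; the paper also records the derivative signs $N'(1)<0$ and $N'(n)>0$, though these are not strictly needed once convexity and the endpoint comparison are in hand.

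Your approach replaces the convexity argument by the explicit factorization
\[
N(1)-N(p)=\frac{n(p-1)}{3p}\bigl(3n^2-p(2p+5)\bigr),
\]
and then bounds the quadratic factor using $p\le n$. This is more elementary and more informative: it exhibits $p=1$ as the exact break-even point and makes the role of the constraint $p\le n$ completely transparent. The paper's convexity route, on the other hand, avoids the algebraic factorization and would generalize more readily if $N(p)$ were replaced by another convex cost model.
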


\begin{proof}
One has
\[
N(1)-N(p)
= n^3+\frac{5}{3}n - \left(
2n^2+\frac{2}{3}n^3
\right)
= \frac{1}{3}(n^3-6n^2+5n)
= \frac{1}{3}n (n-1)(n-5)\ .
\]
Moreover,
\[
\frac{dN(1)}{dp} = n \left(
\frac{7}{3}-n^2
\right)
\]
which is negative for $n\geq 2$, while
\[
\frac{dN(n)}{dp} = n \left(
\frac{4}{3}n^2
\right) \geq 0 \ .
\]
The convexity of $N$ then yields the conclusion that $N(p)\leq N(1)$ for any $1 \leq p \leq n$.
\end{proof}

\begin{cor}[Optimal multithread gain]
\label{cor2}
For any size $n$ of the problem, there exists a unique optimal number $p^*$ of processors minimizing $N(p)$.
Moreover, when $n\to +\infty$,
\begin{subequations}
\begin{gather}
\label{appra}
p^* \approx \left(
\frac{3}{4}
\right)^{\frac{1}{3}} n^{\frac{2}{3}}\\
\label{apprb}
N(p^*) \approx \left(
\left(
\frac{4}{3}
\right)^{\frac{1}{3}}
+ \frac{2}{3}\left(
\frac{3}{4}
\right)^{\frac{2}{3}}
\right) n^{2+\frac{1}{3}} \approx 1.651 n^{2+\frac{1}{3}}
\end{gather}
\end{subequations}
\end{cor}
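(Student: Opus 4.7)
The plan is to treat $p$ as a continuous variable on $(0,+\infty)$, show that $N$ (viewed as a smooth function of $p$) is strictly convex and has a unique positive critical point, and then extract the asymptotics of this critical point and of the corresponding value. The discreteness of $p$ will be handled at the end by noting that rounding affects the result only by lower-order terms.

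First I would rewrite
\begin{equation}
N(p) = \frac{n^3}{p} + 6n^2 + \frac{n}{3}\bigl(2p^2+3p+1\bigr)
\end{equation}
and compute
\begin{equation}
\frac{dN}{dp} = -\frac{n^3}{p^2} + \frac{n}{3}(4p+3), \qquad \frac{d^2N}{dp^2} = \frac{2n^3}{p^3} + \frac{4n}{3} .
\end{equation}
Since the second derivative is strictly positive on $(0,+\infty)$, $N$ is strictly convex there; together with the fact that $N(p)\to+\infty$ as $p\to 0^+$ and as $p\to+\infty$, this yields a unique positive critical point $p^*$, which is the global minimizer over $(0,+\infty)$. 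Strict convexity also ensures uniqueness of the optimal integer $p^*\in\{1,\dots,n\}$ up to one of the two nearest integers bracketing the continuous optimum.

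Setting $dN/dp=0$ gives the defining equation
\begin{equation}
\label{popt}
4(p^*)^3 + 3(p^*)^2 = 3n^2 .
\end{equation}
For large $n$, the dominant balance is $4(p^*)^3 \sim 3n^2$, hence $p^*\sim (3/4)^{1/3}n^{2/3}$, which is \eqref{appra}. More rigorously, writing $p^* = (3/4)^{1/3}n^{2/3}(1+\eps_n)$ and substituting in \eqref{popt}, the $3(p^*)^2$ term is $O(n^{4/3})$, negligible relative to $4(p^*)^3 = 3n^2(1+\eps_n)^3$, so $\eps_n=o(1)$.

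Finally, to obtain \eqref{apprb}, I would substitute the leading-order expression for $p^*$ back into $N(p^*)$. The three terms that contribute at order $n^{7/3}$ are $n^3/p^*$ and $(2n/3)(p^*)^2$; every other term ($6n^2$, $np^*$, $n/3$) is of strictly lower order. Using $p^*\sim (3/4)^{1/3}n^{2/3}$, one gets
\begin{equation}
\frac{n^3}{p^*} \sim \Bigl(\tfrac{4}{3}\Bigr)^{1/3} n^{7/3}, \qquad
\frac{2n(p^*)^2}{3} \sim \tfrac{2}{3}\Bigl(\tfrac{3}{4}\Bigr)^{2/3} n^{7/3},
\end{equation}
whose sum yields the announced coefficient $\approx 1.651$. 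The rounding from the continuous to the integer optimum changes $p^*$ by at most $1$ and $N(p^*)$ by at most $O(n^2)$, which is absorbed in the $o(n^{7/3})$ remainder.

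There is no real obstacle: the argument is just calculus plus a standard asymptotic balance in a cubic. The only point requiring a small amount of care is justifying that the $3(p^*)^2$ correction in \eqref{popt} and the integer rounding do not disturb the leading-order asymptotics of $N(p^*)$; this is straightforward once the orders of magnitude of all terms in $N$ are catalogued.
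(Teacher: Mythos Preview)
Your proof is correct and follows essentially the same approach as the paper: compute $dN/dp$, observe there is a unique positive root, rewrite the stationarity condition as $4(p^*)^3+3(p^*)^2=3n^2$ (equivalently the paper's $n^2=p^2(\tfrac{4}{3}p+1)$), extract the dominant balance to get \eqref{appra}, and substitute back for \eqref{apprb}. Your version is in fact more careful than the paper's, since you justify uniqueness via strict convexity (the paper merely asserts it), make the asymptotic balance explicit, and address the continuous-versus-integer issue, which the paper omits entirely.
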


\begin{proof}
One has
\[
\frac{dN(p)}{dp} = -\frac{n^3}{p^2} + \frac{4}{3} np + n\ .
\]
There exists a unique $p^*$ canceling this expression.
For this value, one has $n^2 = p^2 (\frac{4}{3}p+1)$, which yields the asymptotic behavior given in \eqref{appra}.
The value in \eqref{apprb} is directly deduced.
\end{proof}

The conclusion of Corollary \ref{cor2} is quite important.
It shows that solution of $Ax=b$ is possible by the method proposed here with a cost of $O(n^{2+\frac{1}{3}})$ multiplications.
This is to be compared with the classical results \cite{TW84}. 



\section{Numerical experiments with discussion of multithread implementation}
\label{se5}
This section reports on a suite of numerical experiments carried out on a set of random symmetric matrices of dimensions varying from $1000$ to $25000$, the latter being the largest dimension that could be accommodated in the fast access RAM memory of the multicore processor. The random symmetric matrices were generated by choosing random diagonal matrices $\Lambda$, with positive diagonal entries uniformly distributed between $1$ and a prespecified condition number, which were then pre-multiplied (resp. post-multiplied) by a random orthogonal matrix $U$ (resp. its transpose $U\t$). The random orthogonal matrices $U$ were generated using a C translation of Shilon's MATLAB code \cite{Shilon06}, which produces a matrix distribution uniform over the manifold of orthogonal  matrices with respect to the induced $\R^{n^2}$ Lebesgue measure. The right hand sides and initial conditions were also randomly generated, with all entries uniformly distributed on the interval $[-10,10]$. In this preliminary work, the matrices used were dense and the use of preconditioners was not investigated.

In order to evaluate the performance of the algorithm proposed in this paper, a program was written in language C. The compiler used was the GNU Compiler Collection (GCC), running under Linux Ubuntu 10.0.4. For the Linear Algebra calculations, we used the Linear Algebra Package (LAPACK) and the Basic Linear Algebra Subprograms (BLAS). Finally, to parallelize the program, we used the Open Multi Processing (OMP) API. The processor used was an Intel Core2Quad CPU Q8200 running at $2.33$ MHz with four cores.

The pseudo-code in Algorithm \ref{algo_imp} gives details of the implementation for three ($p=3$) agents.

\begin{algorithm}                    
\caption{\tt Implementation of cooperative Conjugate Gradient (cCG) algorithm}
\label{algo_imp}
\begin{algorithmic}[1]
\tt
\State choose $X_0, Y_0, Z_0 \in\Rset^{n}$
\color{blue}
\Comment{All initialized randomly with numbers between $-10$ and $10$}
\color{black}
\State $r_{0,x} := A\cdot x_0 - b$
\State $r_{0,y} := A\cdot y_0 - b$
\State $r_{0,z} := A\cdot z_0 - b$
\State $d_{0,x} := r_{0,x}$
\State $d_{0,y} := r_{0,y}$
\State $d_{0,z} := r_{0,z}$
\State $k := 0$
\State $minres := min(norm(r_{0,x}),norm(r_{0,y}),norm(r_{0,z}))$
\While{$minres > tolerance$}
    \color{blue}
    \State \Comment Compute matrix-vector products $A \cdot d_{k,i}$
    \color{black}
    \State {\bf agent $1$}: compute $A\cdot d_{k,x}$
    \State {\bf agent $2$}: compute $A\cdot d_{k,y}$
    \State {\bf agent $3$}: compute $A\cdot d_{k,z}$
    \State Barrier
    \color{blue}
    \Comment Synchronizes all $3$ agents, before proceeding to the next computations
    \State \Comment Compute $m_{ij}$
    \color{black}
    \State {\bf agent $1$}: $m_{11} := d_{k,x}^{T}\cdot A\cdot d_{k,x}$; $m_{12} := d_{k,x}^{T}\cdot A\cdot d_{k,y}$
    \State {\bf agent $2$}: $m_{13} := d_{k,x}^{T}\cdot A\cdot d_{k,z}$; $m_{22} := d_{k,y}^{T}\cdot A\cdot d_{k,y}$
    \State {\bf agent $3$}: $m_{23} := d_{k,y}^{T}\cdot A\cdot d_{k,z}$; $m_{33} := d_{k,z}^{T}\cdot A\cdot d_{k,z}$
    \State Barrier
    \color{blue}
    \Comment Synchronizes all $3$ agents, before proceeding to the next computations
    \color{black}
    \State Initialize $M := \{m_{ij}\}$
    \color{blue}
    \Comment Symmetric matrix needed to compute alpha, $m_{ij} = m_{ji}$
    \State \Comment Right-hand sides needed to compute alpha
    \color{black}
    \State {\bf agent $1$}: $n_{1} := [r_{k,x}^{T}\cdot d_{k,x};r_{k,x}^{T}\cdot d_{k,y};r_{k,x}^{T}\cdot d_{k,z}] $
    \State {\bf agent $2$}: $n_{2} := [r_{k,y}^{T}\cdot d_{k,x};r_{k,y}^{T}\cdot d_{k,y};r_{k,y}^{T}\cdot d_{k,z}] $
    \State {\bf agent $3$}: $n_{3} := [r_{k,z}^{T}\cdot d_{k,x};r_{k,z}^{T}\cdot d_{k,y};r_{k,z}^{T}\cdot d_{k,z}] $
    \color{blue}
    \State \Comment Computation of alpha
    \color{black}
    \State {\bf agent $1$}: Solve $M\cdot \alpha_{1} = n_{1}$
    \State {\bf agent $2$}: Solve $M\cdot \alpha_{2} = n_{2}$
    \State {\bf agent $3$}: Solve $M\cdot \alpha_{3} = n_{3}$
    \color{blue}
    \State \Comment Update estimates of each agent
    \color{black}
    \State {\bf agent $1$}: $x_{k} \leftarrow x_{k}+\alpha_{1,1}\cdot d_{k,x}+\alpha_{1,2}\cdot d_{k,y}+\alpha_{1,3}\cdot d_{k,z}$
    \State {\bf agent $2$}: $y_{k} \leftarrow y_{k}+\alpha_{2,1}\cdot d_{k,x}+\alpha_{2,2}\cdot d_{k,y}+\alpha_{2,3}\cdot d_{k,z}$
    \State {\bf agent $3$}: $z_{k} \leftarrow z_{k}+\alpha_{3,1}\cdot d_{k,x}+\alpha_{3,2}\cdot d_{k,y}+\alpha_{3,3}\cdot d_{k,z}$
    \color{blue}
    \State \Comment Update residues of each agent
    \color{black}
    \State {\bf agent $1$}: $r_{k,x} := A\cdot x_{k} - b$
    \State {\bf agent $2$}: $r_{k,y} := A\cdot y_{k} - b$
    \State {\bf agent $3$}: $r_{k,z} := A\cdot z_{k} - b$
    \color{blue}
    \State \Comment Right-hand sides needed to compute beta
    \color{black}
    \State {\bf agent $1$}: $n_{1} := [r_{k,x}^{T}\cdot A\cdot d_{k,x};r_{k,x}^{T}\cdot A\cdot d_{k,y};r_{k,x}^{T}\cdot A\cdot d_{k,z}] $
    \State {\bf agent $2$}: $n_{2} := [r_{k,y}^{T}\cdot A\cdot d_{k,x};r_{k,y}^{T}\cdot A\cdot d_{k,y};r_{k,y}^{T}\cdot A\cdot d_{k,z}] $
    \State {\bf agent $3$}: $n_{3} := [r_{k,z}^{T}\cdot A\cdot d_{k,x};r_{k,z}^{T}\cdot A\cdot d_{k,y};r_{k,z}^{T}\cdot A\cdot d_{k,z}] $
    \color{blue}
    \State \Comment Computation of beta
    \color{black}
    \State {\bf agent $1$}: Solve $M\cdot \beta_{1} = n_{1}$
    \State {\bf agent $2$}: Solve $M\cdot \beta_{2} = n_{2}$
    \State {\bf agent $3$}: Solve $M\cdot \beta_{3} = n_{3}$
    \color{blue}
    \State \Comment Update of directions
    \color{black}
    \State {\bf agent $1$}: $d_{k,x} \leftarrow r_{k,x}+\beta_{1,1}\cdot d_{k,x}+\beta_{1,2}\cdot d_{k,y}+\beta_{1,3}\cdot d_{k,z}$
    \State {\bf agent $2$}: $d_{k,y} \leftarrow r_{k,y}+\beta_{2,1}\cdot d_{k,x}+\beta_{2,2}\cdot d_{k,y}+\beta_{2,3}\cdot d_{k,z}$
    \State {\bf agent $3$}: $d_{k,z} \leftarrow r_{k,z}+\beta_{3,1}\cdot d_{k,x}+\beta_{3,2}\cdot d_{k,y}+\beta_{3,3}\cdot d_{k,z}$
    \color{blue}
    \State \Comment Calculate of residual norms
    \color{black}
    \State {\bf agent $1$}: $norm_{r_{x}} = norm(r_{k,x})$
    \State {\bf agent $2$}: $norm_{r_{y}} = norm(r_{k,y})$
    \State {\bf agent $3$}: $norm_{r_{z}} = norm(r_{k,z})$
    \State $minres := min(norm_{r_{x}},norm_{r_{y}},norm_{r_{z}})$
    \State $ k \leftarrow k+1$
\EndWhile
\end{algorithmic}
\end{algorithm}




\subsection{Evaluating speedup}
The results of the Cooperative 3 agent \cCG, in comparison with classic \CG, with a tolerance of $10^{-3}$, and matrices with different sizes, but all with the same condition number of $10^{6}$, are shown in Figure 1. Multiple tests were performed, using different randomly generated initial conditions (20 different initial conditions for the small matrices and 10 for the bigger ones). Figure 1 shows the mean values computed for these tests.

\begin{figure}[h]
\centering
\includegraphics[width=12cm]{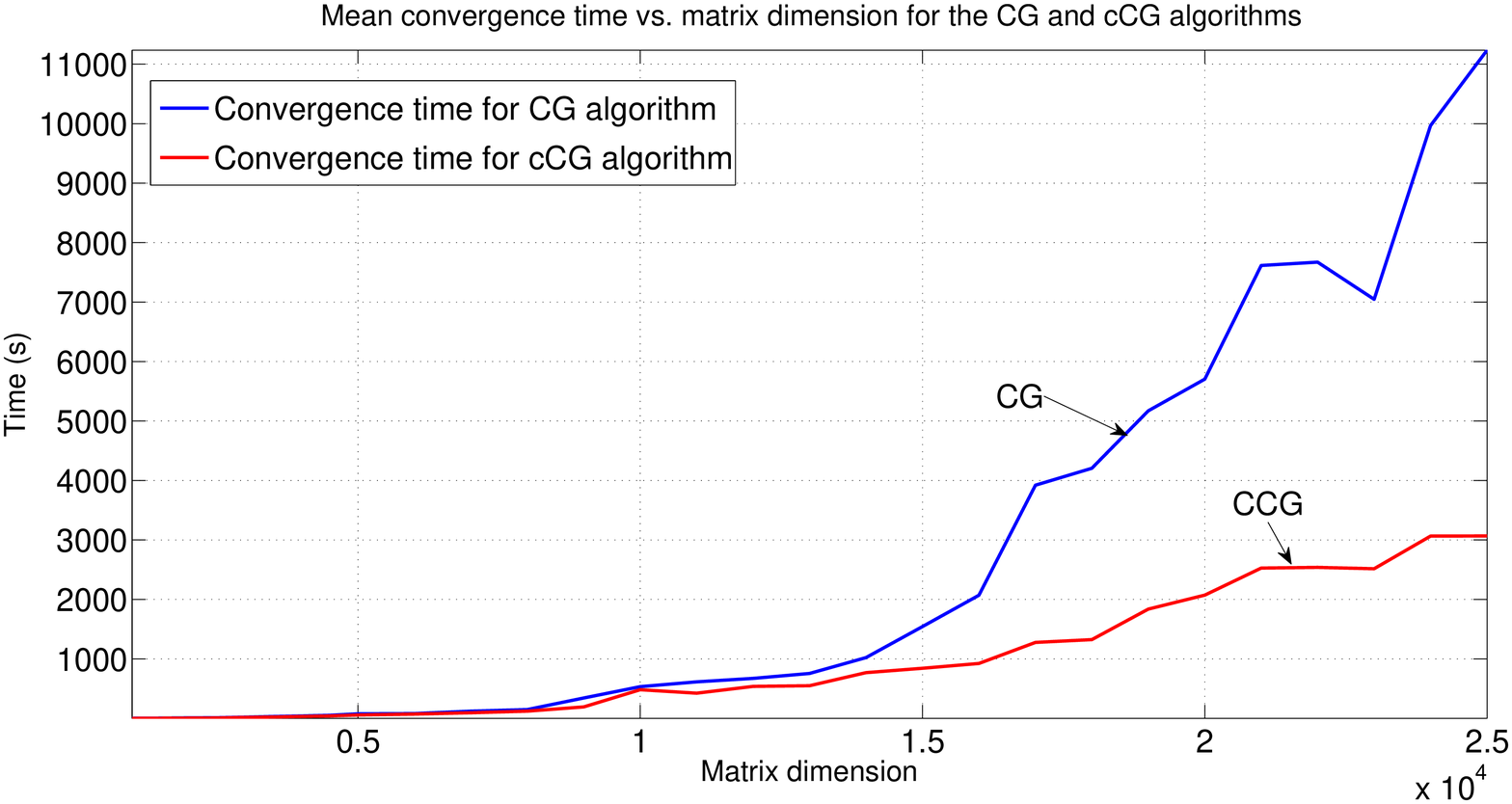}
\caption{Mean time to convergence for random test matrices of dimensions varying from $1000$ to $25000$, for 3 agent \cCG\ and standard \CG\ algorithms.}
\label{tab_results_g}
\end{figure}

The \textit{iteration speedup} of \cCG\ in comparison with \CG\ is defined as the number of iterations that \CG\ took to converge divided by the number of iterations \cCG\ took to converge and the experimental results are shown in Figure \ref{tab_gains_f}, which also shows the classical speed-up, which is the ratio of the time to convergence, i.e., the time taken to run the main loop until convergence, for \CG\ versus \cCG.

\begin{figure}[h]
\centering
\includegraphics[width=12cm]{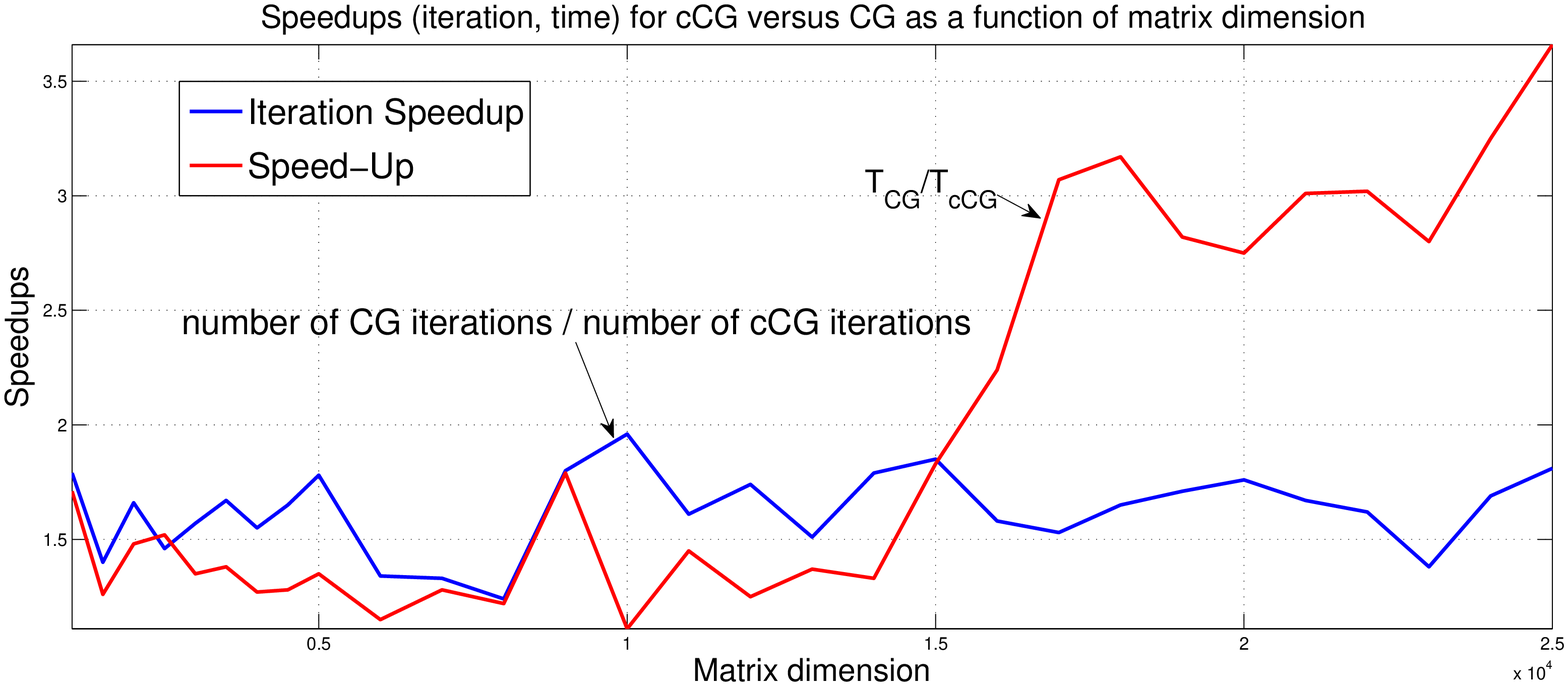}
\caption{Average speedups of Cooperative 3 agent \cCG\ over classic \CG\ for random test matrices of dimensions varying from $1000$ to $25000$.}
\label{tab_gains_f}
\end{figure}

The speedups seem to be roughly equal up to a certain size of matrix ($n=16000$); however, above this dimension, there is an increasing trend for both speedups.

The numerical results obtained show that \cCG, using 3 agents, leads to an improvement in comparison with the usual \CG\ algorithm. The average iteration speedup and the classical speedup of \cCG\ are respectively, $1.62$ and $1.94$, indicating that \cCG\ converges almost twice as fast as \CG\ for dense matrices with reasonably well-separated eigenvalues.

\subsection{Verifying the complexity estimates}
Figure 3 shows the mean time spent per iteration in seconds (points plotted as squares), versus matrix dimension, as well as the parabola fitted to this data, using least squares. Using the result from the last row of table 1 and multiplying it by the mean time per scalar multiplication, we obtain the parabola (dash-dotted line in Figure 3) expected in theory. In order to estimate the time per scalar multiplication, we divided the experimentally obtained mean total time spent on each iteration and divided it by the number of scalar multiplications performed in each iteration. This was done for each matrix dimension. Since the same multicore processor is being used for all experiments, each of these divisions should generate the same value of time taken to carry out each scalar multiplication, regardless of matrix dimension. It was observed that these divisions produced a data set which has a mean value of $8.10$ nanoseconds per scalar multiplication, with a standard deviation of $1.01$ nanoseconds, showing that the estimate is reasonable.

\begin{figure}[h]
\centering
\includegraphics[width=12cm]{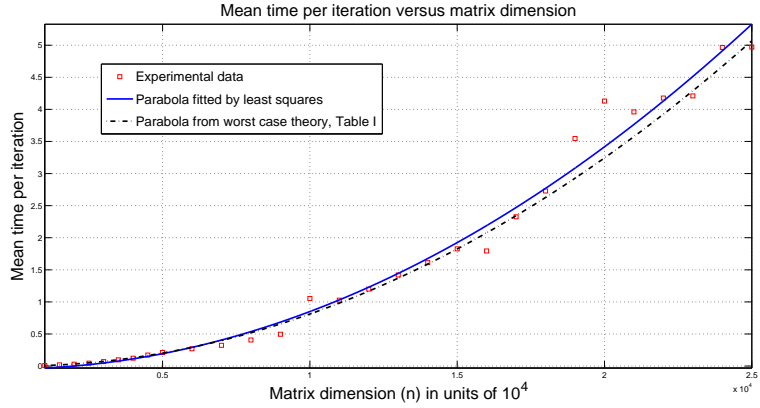}
\caption{Mean time per iteration versus problem dimension}
\label{parabolas}
\end{figure}
Now, from equation (\ref{worst_case}), substituting $p=3$, neglecting small order terms, and multiplying it by the estimated mean time per scalar multiplication ($8.10$ nanoseconds), the number of matrix multiplications per iteration, $N(p), p =3 $, is a cubic polynomial in $n$. Thus, the logarithm of the dimension ($n$) of the problem versus the logarithm of time needed to convergence is expected to be a straight line of slope $3$. Figure 4 shows this straight line, fitted to the data (squares) by least squares.

\begin{figure}[h]
\centering
\includegraphics[width=12cm]{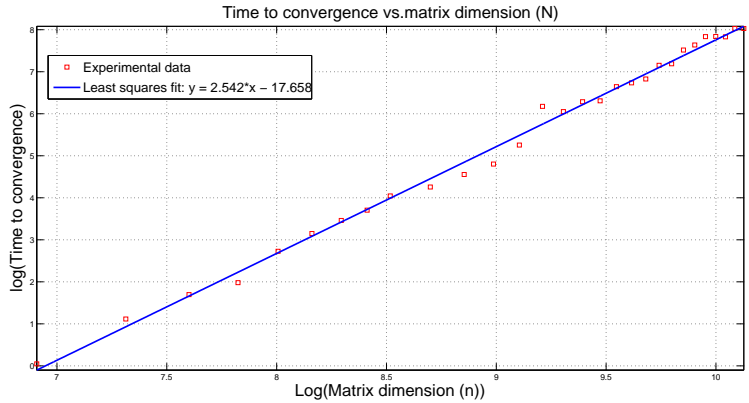}
\caption{Log-log plot of mean time to convergence versus problem dimension}
\label{time}
\end{figure}

Its slope ($2.542$) is fairly close to $3$, and data seems to follow a linear trend. The deviation of the slope from the ideal value has several probable causes, the first one being that the exact exponent of $3$ is a result of a worst case analysis of \CG\ in exact arithmetic. It is known that \CG\ usually converges, to a reasonable tolerance, in much less than $n$ iterations, where $n$ is the matrix dimension \cite{MS06}.

Similarly, the logarithm of the number of iterations needed to convergence versus the logarithm of the dimension of the problem should also follow a linear trend. Since the number of iterations is expected to be $n/3$, the slope of this line should be $1$. This log-log plot is shown in figure \ref{iterations}, in which the straight line was fitted by least squares to the original data (red squares). The slope ($0.501$) of the fitted line is smaller than $1$, but is seen to fit the data well (small residuals). The fact that both slopes are smaller than their expected values indicates that the \cCG\ algorithm is converging faster than the worst case estimate. Another reason is that a fairly coarse tolerance of $10^{-3}$ is used, and experiments reported show that decreasing the tolerance favors the \cCG\ algorithm even more. Specifically, for a randomly generated matrix  of dimension $8000$ and condition number $10^6$, Table 2 shows the mean number of iterations and time to convergence, calculated for $10$ different initial conditions, for the \CG and \cCG\ algorithms, as the tolerance is varied from $10^{-3}$ to $10^{-9}$

\begin{figure}[h]
\centering
\includegraphics[width=12cm]{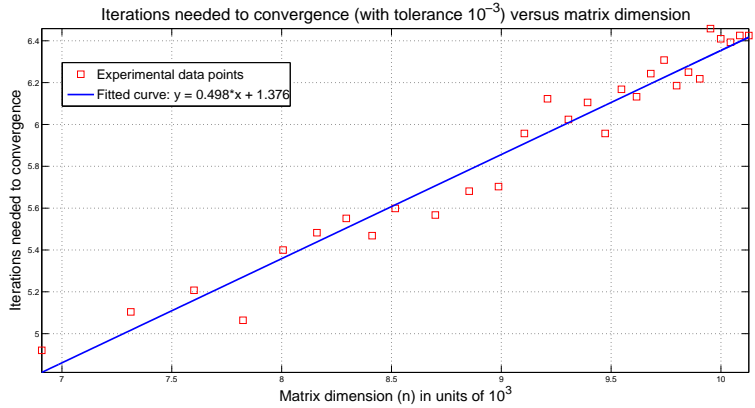}
\caption{Iterations needed to convergence versus problem dimension}
\label{iterations}
\end{figure}

  \begin{table}[h!]
    \centering
    \caption{Mean number of iterations and mean time to convergence for the \CG\ and the \cCG\ algorithms, as a function of tolerance used in the stopping criterion.}
    \begin{tabular}{| c | c | c | c | c |}
        \hline
        \textbf{Tolerance} & \multicolumn{2}{|c|}{\textbf{CG}} & \multicolumn{2}{|c|}{\textbf{cCG}} \\
        \cline{2-5} & Time (s) & Iterations & Time (s) & Iterations \\
        \hline
        $10^{-3}$ & $148.20$ & $372.20$ & $121.80$ & $299.70$ \\
        \hline
        $10^{-4}$ & $165.20$ & $414.00$ & $125.70$ & $319.40$ \\
        \hline
        $10^{-5}$ & $177.40$ & $444.90$ & $132.00$ & $335.00$ \\
        \hline
        $10^{-6}$ & $192.70$ & $476.80$ & $134.00$ & $352.70$ \\
        \hline
        $10^{-7}$ & $206.50$ & $510.70$ & $135.20$ & $336.20$ \\
        \hline
        $10^{-8}$ & $235.10$ & $538.40$ & $137.10$ & $373.50$ \\
        \hline
        $10^{-9}$ & $275.00$ & $559.70$ & $141.90$ & $381.20$ \\
        \hline
    \end{tabular}
    \label{tolerancia}
  \end{table}



The data used to generate all the graphs in the figures above is shown in tables 3 and 4.


\begin{table}[h!]
    \centering
        \begin{tabular}{| c | c | c | c | c |}
        \hline
            \textbf{Matrix Dimension} & \multicolumn{2}{|c|}{\textbf{Number of iterations}} & \multicolumn{2}{|c|}{\textbf{Time (s)}}\\
            \cline{2-5}& \textbf{CG} & \textbf{CCG} &\textbf{CG} &\textbf{CCG}\\
        \hline
            $1000$ & $245.50$ & $137.05$ & $1.80$ & $1.05$\\
        \hline
            $1500$ & $230.15$ & $146.65$ & $3.85$ & $3.05$\\
        \hline
            $2000$ & $303.35$ & $182.60$ & $8.05$ & $5.45$\\
        \hline
            $2500$ & $231.60$ & $158.20$ & $11.05$ & $7.25$\\
        \hline
            $3000$ & $347.00$ & $221.35$ & $20.65$ & $15.30$\\
        \hline
            $3500$ & $402.15$ & $240.50$ & $32.15$ & $23.35$\\
        \hline
            $4000$ & $399.85$ & $257.45$ & $40.30$ & $31.85$\\
        \hline
            $4500$ & $391.85$ & $237.10$ & $51.95$ & $40.70$\\
        \hline
            $5000$ & $481.60$ & $270.05$ & $77.00$ & $57.05$\\
        \hline
            $6000$ & $351.90$ & $261.70$ & $81.10$ & $70.60$\\
        \hline
            $7000$ & $390.90$ & $293.30$ & $121.70$ & $95.00$\\
        \hline
            $8000$ & $372.20$ & $299.70$ & $148.20$ & $121.80$\\
        \hline
            $9000$ & $659.90$ & $386.50$ & $343.50$ & $191.50$\\
        \hline
            $10000$ & $894.70$ & $456.20$ & $532.60$ & $480.80$\\
        \hline
            $11000$ & $667.00$ & $413.20$ & $614.40$ & $413.20$\\
        \hline
            $12000$ & $780.00$ & $448.20$ & $673.00$ & $537.40$\\
        \hline
            $13000$ & $582.80$ & $386.50$ & $753.90$ & $548.60$\\
        \hline
            $14000$ & $853.20$ & $477.30$ & $1022.70$ & $769.00$\\
        \hline
            $15000$ & $852.40$ & $460.60$ & $1543.00$ & $841.70$\\
        \hline
            $16000$ & $813.60$ & $514.40$ & $2070.00$ & $922.70$\\
        \hline
            $17000$ & $842.20$ & $548.90$ & $3921.60$ & $1277.20$\\
        \hline
            $18000$ & $802.50$ & $485.70$ & $4204.70$ & $1325.20$\\
        \hline
            $19000$ & $884.50$ & $518.00$ & $5171.50$ & $1836.70$\\
        \hline
            $20000$ & $882.10$ & $501.90$ & $5703.70$ & $2072.60$\\
        \hline
            $21000$ & $1064.30$ & $638.00$ & $7614.70$ & $2526.40$\\
        \hline
            $22000$ & $7671.80$ & $2537.60$ & $985.10$ & $607.60$\\
        \hline
            $23000$ & $7045.60$ & $2516.40$ & $826.30$ & $597.70$\\
        \hline
            $24000$ & $9969.20$ & $3065.20$ & $1040.90$ & $617.40$\\
        \hline
            $25000$ & $1114.70$ & $617.20$ & $11237.40$ & $3067.50$\\
        \hline
        \end{tabular}
    \caption{Average results for multiple test matrices of dimensions varying from $1000$ to $25000$, for Cooperative 3 agent \cCG and for classic \CG.}\label{tab_results}
\end{table}


\begin{table}[h!]
    \centering
        \begin{tabular}{| c | c | c |}
        \hline
            \textbf{Matrix Dimension} & \textbf{Iteration Ratio} & \textbf{Speed-Up}\\
        \hline
            $1000$ & $1.79$ & $1.71$\\
        \hline
            $1500$ & $1.40$ & $1.26$\\
        \hline
            $2000$ & $1.66$ & $1.48$\\
        \hline
            $2500$ & $1.46$ & $1.52$\\
        \hline
            $3000$ & $1.57$ & $1.35$\\
        \hline
            $3500$ & $1.67$ & $1.38$\\
        \hline
            $4000$ & $1.55$ & $1.27$\\
        \hline
            $4500$ & $1.65$ & $1.28$\\
        \hline
            $5000$ & $1.78$ & $1.35$\\
        \hline
            $6000$ & $1.34$ & $1.15$\\
        \hline
            $7000$ & $1.33$ & $1.28$\\
        \hline
            $8000$ & $1.24$ & $1.22$\\
        \hline
            $9000$ & $1.80$ & $1.79$\\
        \hline
            $10000$ & $1.96$ & $1.11$\\
        \hline
            $11000$ & $1.61$ & $1.45$\\
        \hline
            $12000$ & $1.74$ & $1.25$\\
        \hline
            $13000$ & $1.51$ & $1.37$\\
        \hline
            $14000$ & $1.79$ & $1.33$\\
        \hline
            $15000$ & $1.85$ & $1.83$\\
        \hline
            $16000$ & $1.58$ & $2.24$\\
        \hline
            $17000$ & $1.53$ & $3.07$\\
        \hline
            $18000$ & $1.65$ & $3.17$\\
        \hline
            $19000$ & $1.71$ & $2.82$\\
        \hline
            $20000$ & $1.76$ & $2.75$\\
        \hline
            $21000$ & $1.67$ & $3.01$\\
        \hline
            $22000$ & $1.62$ & $3.02$\\
        \hline
            $23000$ & $1.38$ & $2.80$\\
        \hline
            $24000$ & $1.69$ & $3.25$\\
        \hline
            $25000$ & $1.81$ & $3.66$\\
        \hline
        \end{tabular}
    \caption{Average gains of Cooperative 3 agent \cCG\ over classic \CG\ for matrices of dimensions varying from $1000$ to $25000$.}\label{tab_gains}
\end{table}

\section{Concluding Remarks}
\label{se6}
This paper proposed a new cooperative conjugate gradient (\cCG) method for linear systems with symmetric positive definite coefficient matrices. This \cCG\ method permits efficient implementation on a multicore computer and experimental results bear out the main theoretical properties, namely, that speedups close to the theoretical value of $p$, when a $p$-core computer is used, are possible, when the matrix dimension is suitably large. The experimental results of the current study were limited to dense randomly generated matrices and only $3$ cores of a $4$ core computer with a relatively small on-chip shared memory were used. Future work will include the study of the method on matrices that come from real applications and are typically sparse and sometimes ill-conditioned (which will necessitate the use of preconditioners) on larger multi-core machines. The use of larger machines should also permit exploration of the notable theoretical result (corollary \ref{cor2}) that, in the asymptotic limit, as $n$ becomes large, implying that $p$ also increases according to (\ref{appra}), solution of $Ax=b$ is possible by the method proposed here with a cost of $O(n^{2+\frac{1}{3}})$ multiplications. 


\end{document}